\theoremstyle{plain}
    \newtheorem{thm}{Theorem}[section]
    \newtheorem{claim}[thm]{Claim}
    \newtheorem{corollary}[thm]{Corollary}
    \newtheorem{lemma}[thm]{Lemma}
    \newtheorem{conjecture}[thm]{Conjecture}
    \newtheorem{theorem}[thm]{Theorem}
\theoremstyle{definition}
    \newtheorem{remark}[thm]{Remark}
\theoremstyle{remark}
    \newtheorem{setup}[thm]{}
\newcommand{\C}{\mathbb{C}}
\newcommand{\Q}{\mathbb{Q}}
\newcommand{\PP}{\mathbb{P}}
\newcommand{\R}{\mathbb{R}}
\newcommand{\Z}{\mathbb{Z}}
\newcommand{\SD}{\mathcal{D}}
\newcommand{\Exc}{\operatorname{Exc}}
\newcommand{\id}{\operatorname{id}}
\newcommand{\NE}{\overline{\operatorname{NE}}}
\newcommand{\NS}{\operatorname{NS}}
\newcommand{\Sing}{\operatorname{Sing}}
\newcommand{\Supp}{\operatorname{Supp}}
\begin{document}

\title[Positivity criteria for log canonical divisors and hyperbolicity]
{Positivity criteria for log canonical divisors and hyperbolicity}

\author{Steven S.~Y.~Lu}
\address
{
\textsc{D\'epartement de Math\'ematiques} \endgraf
%\textsc{ Universit\'e du Qu\'ebec \`a Montr\'eal,} \endgraf
\textsc{UQAM, C.P. 8888, Succersale Centreville, Montr\'eal,
Qu\'ebec, Cananda  H3C 3P8
}}
\email{lu.steven@uqam.ca}

\author{De-Qi Zhang}
\address
{
\textsc{Department of Mathematics} \endgraf
\textsc{National University of Singapore, 10 Lower Kent Ridge Road,
Singapore 119076
}}
\email{matzdq@nus.edu.sg}

\begin{abstract}
Let $X$ be a complex projective variety
and $D$ a reduced divisor on $X$.
Under mild conditions on the singularities
of $(X, D)$, which includes the case of smooth
$X$ with simple normal crossing $D$, and by running
the minimal model program,
we obtain by induction on dimension via adjunction
geometric criteria guaranteeing various positivity
conditions for $K_X+D$.
Our geometric criterion for $K_X+D$
to be numerically effective yields also
a geometric version of the cone theorem
and
a criterion for $K_X+D$ to be pseudo-effective
with mild hypothesis on $D$.
We also obtain,
assuming the abundance conjecture and
the existence of rational curves on Calabi-Yau manifolds,
an optimal geometric sharpening of the Nakai-Moishezon
criterion for the ampleness of a divisor of the form $K_X+D$,
a criterion verified under a canonical hyperbolicity assumption on
$(X,D)$. Without these conjectures, we verify
this ampleness criterion with mild assumptions on $D$, being none in
dimension two and $D\neq 0$ in dimension three.\\[-12mm]
\end{abstract}

\subjclass[2000]
{32Q45, %32Q45  Hyperbolic and Kobayashi hyperbolic manifolds
14E30}  %Minimal model program (Mori theory, extremal rays)

\keywords{Brody hyberbolic, ample log canonical (or adjoint) divisor}

%\thanks{ }

\maketitle

%\tableofcontents

\section{Introduction}

We start with some preliminaries before stating our main results.
We work over the field $\C$ of complex numbers.
An algebraic variety $X$ is called {\it Brody hyperbolic} = BH
(resp.~Mori hyperbolic = MH)
if the following hypothesis (BH) (resp.~(MH)) is satisfied:

\begin{itemize}
\item [(BH)]
Every holomorphic map from the complex line $\C$ to $X$ is a constant map.
\item[(MH)]
Every algebraic morphism from the complex line $\C$ to $X$ is a constant map.\\
(Eqivalently, no algebraic curve in $X$ has normalization equal to
$\PP^1$ or $\C$.)
\end{itemize}

%Clearly, (BH) $\Rightarrow$ (MH).

Consider a pair $(X, D)$ of an (irreducible) algebraic variety $X$ and a
(not necessarily irreducible
or equi-dimensional) Zariski-closed subset $D$ of $X$.
When $X$ is an irreducible curve and $D$ a finite subset of $X$,
the pair $(X, D)$ is {\it Brody hyperbolic}
(resp.~MH)
if $X \setminus D$ is
Brody hyperbolic (resp.~MH).
Inductively, for an algebraic variety $X$
and a Zariski-closed subset $D$ of $X$,
with $D = \cup_i D_i$ the irreducible decomposition, the pair $(X, D)$ is
{\it Brody hyperbolic} (resp.~MH)
if
$X \setminus D$ and the pairs $(D_k, D_k \cap (\cup_{i \ne k} D_i))$ for all $k$, are so.
Note that the Zariski-closed subset $D_k \cap (\cup_{i \ne k} D_i)$ may not be equi-dimensional in general.\\[-5mm]

Here, it is crucial %from the perspective of hyperbolic geometry
that $D$ is a divisor and all the $D_k$'s are
at least $\Q$-Cartier
(so as to have equidimensionality above
for example). But we will work with
a bit less from the outset as demanded by our general setup.\\[-4mm]

The pair $(X, D)$ is called {\it projective} if $X$ is a projective variety and {\it log smooth} if
further $X$ is smooth and $D$ is a reduced {\it divisor} with simple normal crossings.\\[-5mm]

%{\it From now on, we always assume that $D$ is a %$($not necessarily Cartier$)$ reduced Weil divisor for a pair $(X, D)$.} \\[-3mm]

Recall that a divisor $F$ on a smooth projective variety
$X$ is called {\it numerically effective}
(nef), if $F.C\geq 0$ for all curves $C$ on $X$.
We consider the following conjecture:\\[-8mm]

\begin{conjecture}\label{ConjA}
Let $(X, D)$ be a log smooth projective Mori hyperbolic
$($resp.~Brody hyperbolic$)$ pair.
Then the log canonical divisor $K_X + D$ is nef
$($resp.~ample$)$.\\[-8mm]
\end{conjecture}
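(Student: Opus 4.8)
The plan is to reduce positivity of $K_X+D$ to the absence of low-degree rational curves measured relative to $D$, by running the log minimal model program for the dlt pair $(X,D)$, and to handle the components of $D$ by induction on $\dim X$ via adjunction.

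\emph{The nef assertion.} I would argue by contradiction and induction on $\dim X$; for curves the statement follows from computing $\deg(K_X+D)=2g-2+\deg D$. Suppose $K_X+D$ is not nef. Since $(X,D)$ is log smooth it is divisorially log terminal, so the log cone theorem produces a $(K_X+D)$-negative extremal ray, and a bend-and-break argument carried out relative to $D$ yields a rational curve $C$ spanning it. The technical core --- this is the \emph{geometric cone theorem} announced above --- is that $C$ can be chosen so that either $C\subseteq\Supp D$, or else $C\not\subseteq\Supp D$ and the normalization $\nu\colon\PP^1\to C\subset X$ satisfies $\#\,\nu^{-1}(\Supp D)\le 1$ (this is precisely the room needed to allow, say, a line tangent to a conic). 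In the latter case $\PP^1\setminus\nu^{-1}(\Supp D)$ is $\PP^1$ or $\C$ and maps non-constantly into $X\setminus D$, contradicting the Mori hyperbolicity of $X\setminus D$. In the former case, fix a component $D_k\supseteq C$; by adjunction for a log smooth pair, $(K_X+D)|_{D_k}=K_{D_k}+D_k'$ where $D_k'=D_k\cap(\cup_{i\ne k}D_i)$ is again simple normal crossing, so $(D_k,D_k')$ is log smooth and, by the inductive definition of Mori hyperbolicity, Mori hyperbolic; since $(K_{D_k}+D_k')\cdot C=(K_X+D)\cdot C<0$ and $\dim D_k<\dim X$, this contradicts the inductive hypothesis. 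This closes the induction.

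\emph{The ample assertion.} Here $K_X+D$ is already nef, Brody hyperbolicity implying Mori hyperbolicity, so only the upgrade from nef to ample is at stake. I would first make $K_X+D$ semiample: if it is nef and big this is the base-point-free theorem, and otherwise one invokes the abundance conjecture for $(X,D)$. The resulting morphism $\phi\colon X\to X_{\can}$ onto the log canonical model satisfies $K_X+D=\phi^{*}A$ with $A$ ample and contracts exactly the $(K_X+D)$-trivial curves, so it suffices to see that $\phi$ is an isomorphism. If not, some positive-dimensional fibre $F$ appears, and $(F,D|_F)$ is a dlt log Calabi-Yau pair, $K_F+D|_F\equiv 0$, which inherits Brody hyperbolicity. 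One then needs a rational curve in $(F,D|_F)$ that either meets $D|_F$ in at most one point --- contradicting Brody hyperbolicity of $F\setminus D|_F$ --- or is contained in $D|_F$, in which case adjunction again reduces it to a lower-dimensional stratum. When $D|_F=0$ this is the assertion that a positive-dimensional Calabi-Yau manifold carrying no rational curve (an elliptic curve or abelian variety, for instance) fails to be Brody hyperbolic, so ``hyperbolic'' forces ``contains a rational curve'' modulo the conjecture on rational curves on Calabi-Yau manifolds. Given such a curve, the geometric cone theorem again controls its intersection with the boundary, and we conclude.

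\emph{Main obstacle.} The genuine difficulties are concentrated in the ample statement: the abundance conjecture, needed to pass from $K_X+D$ nef to semiample (hence to the log canonical model) when $K_X+D$ is not big, and the existence of rational curves on Calabi-Yau manifolds, needed to exclude hyperbolic log Calabi-Yau fibres of $\phi$. Lacking these, one can only establish the ample statement under extra hypotheses --- for instance on the number of ample versus non-ample components of $D$ --- chosen precisely so that the troublesome fibres of $\phi$ are ruled out by hand. A more concealed point, present already in the nef case, is that the log cone theorem and the log minimal model program must be available for dlt pairs in the relevant dimension, and that bend and break must be run relative to $D$: the standard version controls $-(K_X+D)\cdot C$ but not the number of points in which $C$ meets $D$, and it is this refinement --- the geometric cone theorem --- that is the genuinely new geometric input.
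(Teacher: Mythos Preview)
Your strategy matches the paper's: the nef assertion is exactly Theorem~\ref{ThF'}, and for ampleness you have correctly isolated abundance and the existence of rational curves on Calabi-Yau manifolds as the two conjectural inputs, which is precisely why the paper can only establish the conditional Theorems~\ref{ThE'} and~\ref{ThEE''}. Two technical points differ from your sketch. First, the paper does not run a new relative bend-and-break: once the extremal curve $\ell$ is assumed not to lie in $D$, no component $E$ of the exceptional locus lies in $D$ either, so $E\cap D$ maps finitely under the contraction and hence the contraction has relative dimension $1$ on $E$; Kawamata's standard length bound (Lemma~\ref{length}) then gives $-\ell\cdot(K_X+\Gamma)\le 2$, whence $\ell\cdot D<2$ and, $D$ being Cartier, $\#\,\nu^{-1}(D)\le 1$. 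Second, when the semiample map $h:X\to Z$ is birational, the positive-dimensional fibres are not dlt log Calabi-Yau pairs as your sketch suggests; the paper treats this case separately via Hacon--McKernan rational chain connectedness of fibres together with Lemma~\ref{divcontr}, which shows that any contracted divisor has uniruled general fibre.
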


This conjecture for the case of ample $K_X+D$ with $D=0$
is one of the oldest conjectures in higher dimensional
hyperbolic geometry and is currently known only up to
dimension two. See \cite{Lang}.
Our main results give
affirmative answers to Conjecture \ref{ConjA} but with
further assumptions for the ampleness of $K_X + D$,
such as the ampleness of
at least $n-3$ irreducible
Cartier components of a nonzero $D$ for
$n=\dim\,(X, D) := \dim X$.%\\[-5mm]

Let $D$ be a reduced Weil divisor on a projective variety $X$
and $\mathcal D$ the collection of terms in its irreducible
decomposition $D = \sum_{i=1}^s D_i$. A stratum
of $D$ is a set of the form
$\cap_{i \in I} D_i$ ($= X$ when $I = \emptyset$)
for some partition $\{1, \dots, s\} = I \coprod J$.
We call a $\mathcal D$-{\it rational curve} a
rational curve $\ell$ in a stratum $\cap_{i \in I} D_i$ of $D$
such that the normalization of $\ell \setminus (\cup_{j \in J} D_j)$
contains the complex line $\C$. We call a $\mathcal D$-{\it algebraic $1$-torus}
a rational curve $\ell$ in a stratum $\cap_{i \in I} D_i$ of $D$ such that
$\ell \setminus (\cup_{j \in J} D_j)$ has the
$1$-dimensional algebraic torus
$\C^* = \C \setminus \{0\}$
as its normalization.
We call a closed subvariety $W\subseteq X$ a
$\mathcal{D}$-{\it compact variety}
if $W$ lies in some stratum $\cap_{i \in I} D_i$ of $D$
and $W\cap \cup_{j \in J} D_j=\emptyset$.
A $\mathcal{D}$-compact variety $W \subseteq X$ is called a
$\mathcal{D}$-compact rational curve,
a $\mathcal{D}$-{\it torus},
a $\mathcal{D}$-$\Q$-{\it torus}, or a $\mathcal{D}$-$\Q$-{\it CY}
variety
if $W$ is a rational curve, an abelian variety,
a $\Q$-torus, or a $\Q$-CY variety, respectively.%\\[-5mm]

Here a projective variety is called a {\it $\Q$-torus} if
it has an abelian variety as its finite \'etale (Galois)
cover. A projective variety $X$ with only klt singularities
(see \S \ref{setup1} or \cite{KM})
is called a $\Q$-{\it CY variety} if some positive multiple of
its canonical divisor $K_X$ is linearly equivalent to
the trivial divisor. A simply connected
$\Q$-{\it CY} surface with only Du Val
singularities is called a {\it normal K3 surface}.
A $\Q$-torus is an example of a smooth $\Q$-CY.\\[-6mm]

\begin{theorem}\label{ThF'}
Let $X$ be a smooth projective variety of dimension $n$
and $D$ a reduced
divisor on $X$ with simple normal crossings.
Then the closure $\NE(X)$ of effective $1$-cycles on $X$ is generated by
the $(K_X + D)$-non negative part and
at most a countable collection of
extremal $\mathcal{D}$-rational curves
$\{\ell_i\}_{i \in N}, \, N \subseteq \Z$:
$$\NE(X) = \NE(X)_{K_X + D \ge 0} +
\sum_{i \in N} \R_{> 0}[\ell_i] .$$
Further, $-\ell_i . (K_X + D)$ is in $\{1, 2, \dots, 2n\}$,
and is equal to $1$
when $\ell_i$ is not a $\mathcal{D}$-{\rm compact} rational curve.
In particular, if the pair $(X, D)$ is Mori hyperbolic
then $K_X + D$ is nef.
\end{theorem}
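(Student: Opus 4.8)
The plan is to derive the statement from the cone and contraction theorem for dlt pairs and then to upgrade the rational curves it produces into $\mathcal{D}$-rational curves by descending through the strata of $D$ via adjunction and repeatedly applying bend and break; the Mori hyperbolicity assertion will then be a formality.

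Since $(X,D)$ is log smooth it is in particular dlt, so the cone and contraction theorem applies (in the form valid for dlt, indeed lc, pairs): there is an at most countable family $\{R_i\}_{i\in N}$ of $(K_X+D)$-negative extremal rays of $\NE(X)$, locally discrete in $\{z : z\cdot(K_X+D)<0\}$, each spanned by a rational curve and admitting a contraction morphism $f_i\colon X\to Z_i$, and
\[
\NE(X) \;=\; \NE(X)_{K_X+D\ge 0} \;+\; \sum_{i\in N} R_i .
\]
It suffices to prove that each $R=R_i$ is spanned by a $\mathcal{D}$-rational curve $\ell$ with $-\ell\cdot(K_X+D)\in\{1,\dots,2n\}$, equal to $1$ unless $\ell$ is $\mathcal{D}$-compact. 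Granting this, the last assertion is immediate: by its recursive definition, Mori hyperbolicity of $(X,D)$ says exactly that every open stratum $\bigl(\bigcap_{i\in I}D_i\bigr)\setminus\bigcup_{j\notin I}D_j$ of $D$ (with $I=\emptyset$ giving $X\setminus D$) carries no nonconstant morphism from $\C$; a $\mathcal{D}$-rational curve in $\bigcap_{i\in I}D_i$ would produce such a morphism into the corresponding open stratum, so none exists, $N=\emptyset$, $\NE(X)=\NE(X)_{K_X+D\ge 0}$, and $K_X+D$ is nef.

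To handle a fixed ray $R$ with contraction $f\colon X\to Z$, I would argue by induction on $n=\dim X$. If some component $D_{i_0}$ of $D$ satisfies $D_{i_0}\cdot R<0$, then every irreducible curve with class in $R$ is contained in $D_{i_0}$; by the adjunction formula for simple normal crossings, $(K_X+D)|_{D_{i_0}}=K_{D_{i_0}}+(D-D_{i_0})|_{D_{i_0}}$ is again log smooth, of dimension $n-1$, the morphism $f$ restricts to it, and because the strata of the smaller pair are strata of $D$ (so that being $\mathcal{D}$-compact is inherited), the inductive hypothesis applied inside $D_{i_0}$ supplies the required curve. Otherwise $D_i\cdot R\ge 0$ for all $i$, so $-K_X\cdot R=-(K_X+D)\cdot R+\sum_i D_i\cdot R>0$ and $R$ is also a $K_X$-negative extremal ray. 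Let $\ell_0$ span $R$ and be contained in a stratum $S=\bigcap_{i\in I}D_i$ of $D$ minimal with this property; then $\ell_0\not\subseteq\Delta_S:=\sum_{j\notin I}(D_j\cap S)$, and by adjunction $(K_X+D)|_S=K_S+\Delta_S$ is a log smooth pair with $-(K_S+\Delta_S)\cdot\ell_0=-(K_X+D)\cdot\ell_0>0$. Working inside $S$, the curve $\ell_0$ lies in a fibre of $f|_S$ along which $-(K_S+\Delta_S)$ is ample, so Mori's theorem produces a rational curve $\ell$ of $R$ through a general point of that fibre with $-K_S\cdot\ell\le\dim S+1$; alternately splitting off lower-degree components by bend and break through two boundary points (whenever $-K_S\cdot\ell$ is large) and deforming $\ell$ inside the family of rational curves of $R$ in $S$ so that its intersection points with $\Delta_S$ collide, one arranges that $\ell$ meets $\Delta_S$ in at most one point, i.e.\ that $\ell$ is a $\mathcal{D}$-rational curve. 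If $\ell\cap\Delta_S=\emptyset$ then $\ell$ is $\mathcal{D}$-compact and $-\ell\cdot(K_X+D)=-\ell\cdot K_S\le\dim S+1\le 2n$; otherwise the extremality of $R$, together with a final bend and break keeping the unique point of $\ell\cap\Delta_S$ fixed, forces $-\ell\cdot(K_X+D)=1$.

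I expect the principal obstacle to be exactly this last point: passing from a bounded-degree rational curve in the ray to a $\mathcal{D}$-rational curve of length \emph{exactly} $1$ in the non-$\mathcal{D}$-compact case. A curve spanning an extremal ray need not deform at all in $X$, only inside the fibres of its contraction, so each use of bend and break must be performed relative to $f$ and within the successive strata $\bigcap_{i\in I}D_i$; ensuring, through every restriction by adjunction and every degeneration, that the surviving rational curve still spans $R$ and still meets the boundary in the controlled way calls for careful lower bounds on the dimensions of the relevant $\Hom$-schemes (separating the contribution of $D$ via $-K_X\cdot\ell=-(K_X+D)\cdot\ell+D\cdot\ell$) and for the fact that an extremal rational curve meeting the boundary in a single point and of length at least $2$ must degenerate to one of strictly smaller length. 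A secondary point that needs care is the induction itself: under restriction to $D_{i_0}$ the class of a curve of $R$ need not remain extremal in $\NE(D_{i_0})$, so one works with the cone relative to $f|_{D_{i_0}}$ rather than with an honest extremal ray of the smaller variety.
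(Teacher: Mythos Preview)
Your overall scaffolding (cone theorem for dlt pairs, induction on $n$ via adjunction to components of $D$, extremality to push the curve found on a stratum back to a generator of $R$) matches the paper. The substantive divergence is in how you handle the ray once you are in the situation ``no curve of $R$ lies in $D$'': you propose a direct bend-and-break analysis on the minimal stratum to force $\ell\cap D$ down to at most one point and then to squeeze the length to $1$. That is essentially the approach of McQuillan--Pacienza, which the paper itself cites as an independent route in the log smooth case; it can be made to work, but, as you yourself flag, the ``collide the boundary points / break with one point fixed'' step is where all the content lies, and what you have written is a plan rather than a proof.

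The paper sidesteps this difficulty entirely with a short trick. After the inductive reduction one may assume that \emph{no} curve with class in $R$ lies in $D$ (not merely that $D_i\cdot R\ge 0$; the stronger reduction comes from applying induction on $G$ whenever \emph{some} curve of $R$ sits in a component $G\subseteq D$, and using extremality of $R$ in $X$ to pull the resulting $\mathcal D$-rational curve on $G$ back as a generator of $R$). Under this assumption, for any irreducible component $E_1$ of the exceptional locus of the contraction $f\colon X\to X_1$, the restriction $f|_{E_1\cap D}$ is \emph{finite}: a curve in $E_1\cap D$ contracted by $f$ would be a curve of $R$ contained in $D$. Hence $\dim f(E_1)=\dim E_1-1$, i.e.\ the relative dimension is $1$. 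Now apply Kawamata's length bound not to the pair $(X,D)$ but to the smaller pair $(X,0)$ (still klt, and $R$ is still $K_X$-negative since $D\cdot R\ge 0$): this yields rational curves $\ell$ in $R$ covering $E_1$ with $-\ell\cdot K_X\le 2$. Then $0>-\,\ell\cdot(K_X+D)\ge -2+\ell\cdot D$, so $\ell\cdot D\le 1$ because $D$ is Cartier; thus $\ell$ is a $\mathcal D$-rational curve and $-\ell\cdot(K_X+D)\in\{1,2\}$, with value $1$ exactly when $\ell\cdot D=1$, i.e.\ when $\ell$ is not $\mathcal D$-compact. No bend-and-break beyond what is already packaged in Kawamata's theorem is needed.

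So your proposal is a legitimate alternative strategy, but the step you identify as the principal obstacle is genuinely the whole problem in that approach and you have not carried it out; the paper's argument replaces it by the ``relative dimension $1$ plus Kawamata applied to $(X,0)$'' observation, which delivers the exact bound $-\ell\cdot(K_X+D)=1$ in the non-$\mathcal D$-compact case in one line.
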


Theorem \ref{ThEE''} below generalizes,
to arbitrary dimensions for pairs, of results that were only known
up to dimension two in the case $D=0$ concerning hyperbolic
varieties. This is a partial answer to the ampleness part of Conjecture \ref{ConjA}
which seems to be very difficult. See \S \ref{statement} for more general results but
where we assume two deep conjectures.

\begin{theorem}\label{ThEE''}
Let $X$ be a smooth projective variety of dimension $n$
and $D$ a reduced
divisor on $X$ with simple normal crossings such that
$D$ has at least $n-2$
irreducible components amongst which at least $n-3$ are ample.
If $K_X+D$ is not ample, then either
it has non-positive degree on a $\mathcal{D}$-rational curve
or on a $\mathcal{D}$-algebraic $1$-torus,
or some positive multiple of
it is linearly equivalent to
the trivial divisor on
a $\mathcal{D}$-torus
$T$ with $\dim T \le 2$.
In particular, if $(X,D)$ is Brody hyperbolic, then $K_X+D$ is ample.
\end{theorem}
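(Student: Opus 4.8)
The plan is to derive Theorem \ref{ThEE''} as the special case $r=3$ of Theorem \ref{ThE'}, so the main task is simply to verify that the hypotheses of the general theorem are met and to translate its conclusion into the stated form. First I would invoke the remark immediately following Conjecture \ref{ConjAAA}, which records that \textrm{Abundance}($l$) holds for $l\le 3$ and \textrm{CY}($m$) holds for $m\le 2$. Thus for $r=3$ both standing conjectures are available in the required ranges, namely \textrm{Abundance}($l$) for $l\le r=3$ and \textrm{CY}($m$) for $m<r$, i.e.\ $m\le 2$. The numerical hypothesis of Theorem \ref{ThE'} with $r=3$ asks that $D$ have at least $n-r+1=n-2$ irreducible components, at least $n-r=n-3$ of which are ample; this is verbatim the hypothesis imposed in Theorem \ref{ThEE''}.

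Next I would read off the conclusion. By Theorem \ref{ThE'} applied with $r=3$, if $K_X+D$ fails to be ample then one of three alternatives holds: $K_X+D$ has non-positive degree on a $\mathcal{D}$-rational curve; or it has non-positive degree on a $\mathcal{D}$-algebraic $1$-torus; or some positive multiple of $K_X+D$ is linearly trivial on a smooth $\mathcal{D}$-$\Q$-CY variety $T$ of dimension $<r=3$, i.e.\ $\dim T\le 2$. Because \textrm{CY}($m$) is granted for $m<r$, the final clause of Theorem \ref{ThE'} lets me upgrade $T$ from a $\mathcal{D}$-$\Q$-CY variety to a $\mathcal{D}$-$\Q$-torus. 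To reach the sharper statement of Theorem \ref{ThEE''}, which asserts $T$ is an honest $\mathcal{D}$-torus (an abelian variety) rather than merely a $\Q$-torus, I would pass to the finite étale abelian cover guaranteed by the definition of $\Q$-torus: on a torus $K_T$ is already trivial, and the constraint $\dim T\le 2$ is preserved under étale covers. In dimensions $\le 2$ a smooth $\Q$-CY is either an abelian surface/elliptic curve or is covered by one after a bounded étale base change, so after replacing $T$ by this cover the linear triviality of a multiple of $K_X+D$ descends to the asserted $\mathcal{D}$-torus of dimension $\le 2$.

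The in-particular clause on Brody hyperbolicity follows formally. If $(X,D)$ is Brody hyperbolic in the sense of \S\ref{statement}, then by definition there are no non-constant holomorphic maps $\C\to X\setminus D$ nor on any stratum complement, which rules out both the existence of a $\mathcal{D}$-rational curve and that of a $\mathcal{D}$-algebraic $1$-torus (each of these carries an image of $\C$ in the appropriate stratum complement). A $\mathcal{D}$-torus $T$ of positive dimension likewise contains images of $\C$ in its stratum complement and is thus excluded by Brody hyperbolicity; the case $\dim T=0$ contributes nothing. Hence all three alternatives of the contrapositive are incompatible with Brody hyperbolicity, forcing $K_X+D$ to be ample.

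The only step requiring genuine care, rather than pure bookkeeping, is the passage from $\mathcal{D}$-$\Q$-torus to $\mathcal{D}$-torus in the second paragraph: one must check that the étale abelianizing cover of $T$ can be taken compatibly with the $\mathcal{D}$-compact structure, so that the resulting torus is still a $\mathcal{D}$-torus sitting in the same stratum and disjoint from the remaining components. I expect this to be the main, though mild, obstacle; everything else is a direct specialization of Theorem \ref{ThE'} together with the cited low-dimensional cases of the two conjectures.
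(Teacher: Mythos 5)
Your overall architecture is exactly the paper's: Theorem \ref{ThEE''} is indeed obtained by specializing Theorem \ref{ThE'} to $r=3$, using the remark after Conjecture \ref{ConjAAA} that {\rm Abundance($l$)} holds for $l\le 3$ and {\rm CY($m$)} for $m\le 2$, and the numerical hypotheses and first two alternatives do match verbatim. The genuine gap is in your upgrade from ``$\mathcal{D}$-$\Q$-torus'' to ``$\mathcal{D}$-torus'', and it is not the mild compatibility check you hope for --- the \'etale-cover mechanism is wrong in principle. A $\mathcal{D}$-torus is by definition a closed \emph{subvariety} $W\subseteq X$ lying in a stratum $\cap_{i\in I}D_i$ with $W\cap\bigcup_{j\in J}D_j=\emptyset$ which is an abelian variety; the finite \'etale abelianizing cover $\widetilde T\to T$ is a variety mapping \emph{to} $X$, not a subvariety of $X$, so it can never serve as the required $T$. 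In the one genuinely problematic case --- $T$ a bielliptic surface, which is a $\Q$-torus but not an abelian surface --- no embedded abelianizing cover inside $X$ exists, so the check you defer cannot succeed. Two smaller slips in the same paragraph: ``descends'' points the wrong way (the $\Q$-triviality of $(K_X+D)_{|T}$ \emph{pulls back} to a cover), and the triviality of $K_{\widetilde T}$ is beside the point --- what must be $\Q$-trivial is the restriction of $K_X+D$, not the canonical bundle of $T$.

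The correct repair stays inside $X$ and uses the classification of $\Q$-tori in dimension $\le 2$: a $1$-dimensional $\Q$-torus is already an elliptic curve, and a $2$-dimensional one is either an abelian surface or a bielliptic surface. A bielliptic surface $T=(E\times F)/G$ is an elliptic fibre bundle over the elliptic curve $E/G$, hence contains smooth elliptic curves $C$; any such $C$ lies in the same stratum as $T$ and misses $\bigcup_{j\in J}D_j$ because $T$ does, so $C$ is a $\mathcal{D}$-torus of dimension $1$, and $(K_X+D)_{|C}\sim_{\Q}0$ since it is the restriction of $(K_X+D)_{|T}\sim_{\Q}0$. With this substitution the rest of your argument goes through, including the Brody-hyperbolicity deduction (note that $\C^*$ is a holomorphic image of $\C$ via the exponential, and the $T$ produced above is always of positive dimension, so all three alternatives are excluded). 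One last pedantic point: for $n\le 2$ the choice $r=3\notin\{1,\dots,n\}$ is not literally available in Theorem \ref{ThE'}, so those cases should be referred to Theorem \ref{ThE}(1) (or $r=n$), where the paper already produces an honest elliptic curve or abelian surface.
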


We remark that Theorems \ref{ThF'}, and \ref{ThEE''}
are special cases of Theorems \ref{ThF}, and \ref{ThE'} or \ref{ThE} (see also Theorem \ref{ThD})
where we allow the pair
$(X,D)$ to be singular
and $D$ to be
augmented with a fractional
divisor
$\Gamma$ in a setting that is natural to our approach.
Theorem \ref{ThF'} is also obtained in
McQuillan-Pacienza \cite{MP} by a different method, but not our more
general result in the singular case, Theorem \ref{ThF}.

By running the minimal model program, an
easy consequence of the cone theorem is the following criterion for
$K_X+D$ to be peudo-effective (the weakest form of
positivity in birational geometry) under a mild condition on $D$.
It is a special case of Theorem \ref{ThH}.

\begin{theorem}\label{ThH'}
Let $X$ be a smooth projective variety
and $D$ a nonzero simple normal crossing reduced divisor
no component of which is uniruled.
Then $K_X + D$ is not pseudo-effective if and only if
$D$ has exactly one irreducible component and
$X$ is dominated by $\mathcal D$-rational curves $\ell$
with $-\ell . (K_X + D) = 1$, $\ell \cong \PP^1$ and
$\ell \setminus \ell \cap D \cong \C$ for a general $\ell$.

In particular, $K_X+D$ is pseudo-effective for a log
smooth pair $(X,D)$ such that
$D$ contains two or more non-uniruled components.
\end{theorem}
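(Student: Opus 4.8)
The plan is to prove the stated equivalence by producing, whenever $K_X+D$ fails to be pseudo-effective, a covering family of $\mathcal{D}$-rational curves via the minimal model program, and then to extract both the single-component statement and the precise intersection numbers from the definition of a $\mathcal{D}$-rational curve together with the non-uniruledness hypothesis. The converse is the easy direction: if $D$ is irreducible and $X$ is dominated by $\mathcal{D}$-rational curves $\ell$ with $\ell.(K_X+D)=-1$, then a general such $\ell$ moves in a family sweeping out $X$, so $\alpha=[\ell]$ is a movable curve class; since a pseudo-effective divisor is non-negative on every movable class, $(K_X+D).\alpha=-1<0$ shows $K_X+D$ is not pseudo-effective.

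For the forward direction, assume $K_X+D$ is not pseudo-effective. First I would run the $(K_X+D)$-minimal model program. As $(X,D)$ is log smooth, hence dlt, and $K_X+D$ is not pseudo-effective, after a sequence of divisorial contractions and flips $\pi\colon X\dashrightarrow X'$ the program terminates with a Mori fibre space $f\colon X'\to Z$, $\dim Z<n$, on which $-(K_{X'}+D')$ is $f$-ample, $D'=\pi_*D$. Because every component $D_i$ of $D$ is non-uniruled it cannot be the exceptional divisor of a divisorial contraction (such a divisor is swept out by the contracted rational curves, hence uniruled), so each $D_i$ survives with non-uniruled strict transform $D_i'$. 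The fibres of $f$ are covered by the extremal rational curves $C$ of the contracted ray; a general such $C$ moves to cover $X'$ and so avoids the (codimension $\ge 2$) indeterminacy of $\pi^{-1}$ and meets the $\pi$-exceptional divisors trivially. Its strict transform $\ell$ on $X$ then lies in the open stratum, satisfies $(K_X+D).\ell=(K_{X'}+D').C$ and $D.\ell=D'.C$, and these $\ell$ dominate $X$.

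I now claim $D$ is irreducible with $D.\ell=1$. On one hand, for each component $D_i$ a specialisation argument gives $D_i.\ell\ge 1$: in the universal family $e\colon\mathcal{L}\to X$ of the covering curves, if $D_i.[\ell]=0$ then, by effectivity, every member meeting $D_i$ lies entirely in $D_i$; as $e$ is surjective these members cover $D_i$, so $D_i$ would be uniruled, contrary to hypothesis. On the other hand, the class of the covering family spans a $(K_X+D)$-negative extremal ray, so by Theorem~\ref{ThF'} it is generated by a $\mathcal{D}$-rational curve; since $\ell$ lies in the open stratum, the normalisation of $\ell\setminus(\ell\cap D)$ contains $\C$, forcing $\ell$ to meet $D$ in at most one point, i.e. $D.\ell\le 1$. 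Hence $D.\ell=\sum_i D_i.\ell$ is at least the number of irreducible components of $D$, which together with $D.\ell\le 1$ and $D\neq 0$ yields a single component and $D.\ell=1$. As $\ell$ then meets $D$ and is not $\mathcal{D}$-compact, Theorem~\ref{ThF'} gives $-\ell.(K_X+D)=1$; with $D.\ell=1$ this forces $-K_X.\ell=2$, so a general $\ell$ is a smooth rational curve with $\ell\cong\PP^1$ and $\ell\setminus(\ell\cap D)\cong\C$, exactly as asserted. The final ``in particular'' is the contrapositive of the same computation: the bounds $D.\ell\le 1$ and $D_i.\ell\ge 1$ for non-uniruled $D_i$ show that a non-pseudo-effective $K_X+D$ admits at most one non-uniruled component.

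The main obstacle is the construction in the second paragraph together with the extremality invoked in the third: transporting the minimal fibre curves of the Mori fibre space back across the flips and divisorial contractions of $\pi$ while preserving both their $(K_X+D)$-degree and their scheme-theoretic intersection with $D$, and identifying their class on $X$ with a $(K_X+D)$-negative extremal ray so that Theorem~\ref{ThF'} applies. Concretely one must check that a general such curve avoids the indeterminacy of $\pi^{-1}$ and is disjoint from the exceptional divisors, so that the effective exceptional difference $K_X+D-\pi^*(K_{X'}+D')$ meets $\ell$ in degree $0$, and that the resulting $\ell$ is a length-one $\mathcal{D}$-rational curve sweeping out $X$; this is precisely the content handled in the more general Theorem~\ref{ThH}. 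Once this covering family is in place, both the irreducibility of $D$ and the numerology are formal, as shown above.
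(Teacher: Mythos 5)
Your overall strategy --- run the $(K_X+D)$-MMP to a Mori fibre space, use non-uniruledness to see that no component of $D$ is contracted, then transport fibre curves back to $X$ --- is the same as the paper's, and two pieces of your write-up are sound: the converse direction via movable classes, and the specialisation argument giving $D_i.\ell\ge 1$ for every non-uniruled component (a reasonable substitute for part of the paper's degree count). But the forward direction has a genuine gap at exactly the point you flag: you never prove that the Mori fibre space $\gamma\colon W\to Y$ has one-dimensional fibres, and everything downstream depends on this. In the paper that is the crux of the argument: if $\gamma|_{B_1}$ were not generically finite for some component $B_1$ of $B=D(m)$, one takes the extremal curve $\ell$ to be a general fibre of $\gamma|_{B_1}$, and adjunction (Lemma \ref{dlt}) gives $0>\ell.(K_W+B)=\ell.(K_W+B)_{|B_1}\ge\ell.(K_{B_1}+\Delta_1)$, so $K_{B_1}+\Delta_1$ is negative on a covering family of curves of $B_1$, whence $B_1$ is uniruled --- contradicting the hypothesis. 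Hence every $B_i$ dominates $Y$, forcing $\dim Y=n-1$ and $F\cong\PP^1$, and then $\deg(K_F+B_{|F})<0$ yields $|B\cap F|=1$, which gives both $s=1$ and the upper bound $B.F\le 1$ at one stroke. Your substitute for that upper bound does not work: you assert that the class of the transported curve $\ell$ spans a $(K_X+D)$-negative extremal ray of $\NE(X)$, but extremality on $W$ does not transport across the divisorial contractions and flips of $\pi$, and you offer no argument. Moreover, even granting extremality, Theorem \ref{ThF'} only produces \emph{some} $\mathcal{D}$-rational curve generating the ray, not that $\ell$ itself is one; and a $\mathcal{D}$-rational curve in the open stratum meets $D$ in at most one \emph{point}, which bounds the intersection number $D.\ell$ only if that intersection has multiplicity one. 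Finally, deferring "precisely the content" of the missing construction to Theorem \ref{ThH} is circular, since the statement you are proving is a special case of Theorem \ref{ThH}.

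Two further assertions also secretly require the one-dimensionality of the fibres. First, the claim that a general member of the covering family avoids the codimension-$\ge 2$ locus $\Sigma$ (the image of the exceptional and indeterminacy loci of the $f_i$) is not automatic for curves inside higher-dimensional fibres; in the paper it follows because $\dim\Sigma\le n-2<n-1=\dim Y$, so $\gamma(\Sigma)\subsetneq Y$ and the entire fibre over a general point of $Y$ misses $\Sigma$ --- only then do $F_0\cong F$ and $-F_0.(K_X+D)=-F.(K_W+B)$ hold. Second, $\ell\cong\PP^1$ does not follow from $-K_X.\ell=2$ and domination alone (a general member of a covering family could a priori be a singular curve with normalization $\PP^1$); the paper gets it because $F$ is a scheme-theoretic fibre of the $\PP^1$-fibration $\gamma$ untouched by $\pi^{-1}$. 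To repair your proof, insert the adjunction-plus-uniruledness argument on a component $B_i$ to force $\dim Y=n-1$ \emph{before} any transport back to $X$; after that, your remaining computations ($-F_0.(K_X+D)=2-1=1$ and the irreducibility of $D$) go through essentially as in the paper.
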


Key to our proof in the presence of Cartier boundary divisors
is Kawamata's result
on the length of extremal rays, a fundamental
result in the subject (see Lemma~\ref{length}).
Our proofs of Theorems $\ref{ThE} \sim \ref{ThD}$ are inductive in nature and
reduce the problem to questions on
adjoint divisors
in lower dimensions by adjunction.

\par \vskip 1pc \noindent
{\bf Acknowledgement.}
The authors are partially supported by an NSERC discovery grant
and an ARF of NUS.
This paper is done during the visits of the second author to UQAM
in August 2011 and July 2012, with
the support and the excellent research environment
provided by CIRGET there.
The authors thank the referees for the careful reading, and
are well indebted to answers and suggestions
from experts and colleagues.

\section{Preliminary results}

We use the notation and terminology in \cite{KM}, which is a
basic reference for the log minimal model program (LMMP).
We run the LMMP in our proofs,
but detailed techniques or knowledge of LMMP are not required
as precise references are always given.

By a divisor on a normal variety, we will always mean a Weil $\Q$-divisor.

\begin{setup}\label{setup1}
{\rm
Let $L$ be a $\Q$-Cartier divisor on a projective variety $X$,
i.e. some integer multiple of $L$ is Cartier.
$L$ is {\it pseudo-effective} if its class in the N\'eron-Severi space
$\NS_{\R}(X) := \NS(X) \otimes \R$
is in the closure of the
cone generated by the classes of effective divisors in $\NS_{\R}(X)$.
$L$ is {\it numerically effective} (or  {\it nef\,}) if $\deg(L_{| C}) \ge 0$
for every curve $C$ on $X$.
For two $\Q$-Cartier divisors $L_i$, we write $L_1 \sim_{\Q} L_2$
if $mL_1 \sim mL_2$ (linear equivalence) for some $m \ge 1$.\\[-5mm]

\noindent
Let $\Gamma = \sum_{i=1}^s a_i \Gamma_i$ be a divisor on $X$
with $\Gamma_i$ distinct irreducible divisors and $a_i \in \Q$.
Its {\it integral part} is given by
$\lfloor \Gamma \rfloor := \sum_{i=1}^s \lfloor a_i \rfloor \Gamma_i$
where $\lfloor a_i \rfloor$ is the integral part of $a_i$.\\[-5mm]

\noindent
Recall that a pair $(X, \Delta)$ of an effective
Weil $\Q$-divisor $\Delta$ on a normal variety $X$ is called
{\it divisorial log terminal = dlt}
(resp.~{\it Kawamata log terminal = klt}, or {\it log canonical = lc}) if for some
log resolution (resp.~for one (or equivalently for all) log resolution),
the discrepancy of every exceptional divisor (resp.~of every divisor) on the resolution
satisfies $> -1$ (resp.~$> -1$, or $\ge -1$).
In all cases, $K_X+\Delta$ is $\Q$-Cartier.
A dlt pair $(X, \Delta)$ is klt if and only if $\lfloor \Delta \rfloor = 0$.
In case $\dim X = 2$, if $(X, \Delta)$ is dlt, then $X$
is $\Q$-factorial (so that all Weil divisors are $\Q$-Cartier).
We refer to \cite[Definition 2.34]{KM} and \cite[\S 3]{Co} for details.
}
\end{setup}

The following adjunction formula
is key for our inductive process,
see \cite[Propositions 3.9.2 and 3.9.4]{Co} for a concise statement,
and \cite[Proposition 16.6, 16.7]{Ko} for the proof.
The pair in Lemma \ref{dlt} being dlt
implies by adjunction that
all irreducible components $D_i$ ($1 \le i \le s$) of $D$
as well as %those of $D_{i_1} \cap \cdots \cap D_{i_t}$ ($t \le s$)
those of their intersections
are normal varieties, see \cite[Corollary 5.52]{KM}.
$D_1$ intersects $D - D_1$ transversally at a general point of their intersection
(see \cite[Proposition 16.6.1.1]{Ko}).
In particular,
$(D - D_1)_{| D_1}$ is a well-defined Weil divisor.

\begin{lemma}\label{dlt}
Let $X$ be a normal variety, $D$ a reduced Weil divisor and
$\Gamma$ an effective Weil $\Q$-divisor such
that the pair $(X, D + \Gamma)$ is dlt.
Let $D_1 \subseteq D$ be an irreducible component.
Then $D_1$ is a normal variety and there is an effective
Weil $\Q$-divisor $\Delta$ such that
$$(K_X + D + \Gamma)_{| D_1} = K_{D_1} + (D - D_1)_{| D_1} + \Delta ,$$
$(D - D_1)_{| D_1}$ is a reduced Weil divisor
and the pair $(D_1, (D - D_1)_{| D_1} + \Delta)$ is dlt.
\end{lemma}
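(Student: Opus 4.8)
The plan is to reduce to the standard adjunction statement found in \cite[Propositions 3.9.2, 3.9.4]{Co} and \cite[Propositions 16.6, 16.7]{Ko}, and to extract the three asserted conclusions — normality of $D_1$, the precise form of the restricted log canonical divisor, and the dlt-ness of the induced pair — one at a time. First I would observe that, since $(X, D+\Gamma)$ is dlt and $D_1 \le \lfloor D + \Gamma \rfloor$, the component $D_1$ is a log canonical center of $(X, D+\Gamma)$; by the structure theory of dlt pairs (see \cite[Corollary 5.52]{KM}) every such center, and hence $D_1$ itself, is normal. This also gives that $D_1$ meets $D - D_1$ transversally at a general point of each component of the intersection, as recorded just before the statement, so that the restriction $(D - D_1)_{|D_1}$ is a well-defined reduced Weil divisor on $D_1$.

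Next I would write down the adjunction formula. Different\-i\-ating the $\Q$-Cartier divisor $K_X + D + \Gamma$ and restricting to $D_1$, the classical adjunction (the ``different'' construction) yields
$$(K_X + D + \Gamma)_{|D_1} = K_{D_1} + \mathrm{Diff}_{D_1}(D - D_1 + \Gamma),$$
where the different is an effective Weil $\Q$-divisor on $D_1$ whose coefficients are controlled by those of $D - D_1 + \Gamma$ together with the local analytic type of the singularities of $X$ along $D_1$. The key point, which is exactly the content of \cite[Proposition 16.6]{Ko}, is that because the pair is dlt the component $D_1$ occurs with coefficient $1$ and meets the rest of $D$ transversally in codimension one on $D_1$, so the part of the different coming from $D - D_1$ is precisely $(D - D_1)_{|D_1}$ taken with its reduced structure; the remaining contribution, coming from $\Gamma$ and from the singularities of $X$ in codimension one along $D_1$, is an effective $\Q$-divisor $\Delta$ with no component in common, generically, with $(D-D_1)_{|D_1}$. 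Setting this remaining contribution equal to $\Delta$ gives the displayed equation with $(D - D_1)_{|D_1}$ reduced and $\Delta$ effective.

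Finally I would verify that $(D_1, (D-D_1)_{|D_1} + \Delta)$ is again dlt. This is the inversion-of-adjunction type statement \cite[Proposition 16.7]{Ko}: a log resolution of $(X, D + \Gamma)$ restricts (after possibly further blow-ups over the non-snc locus) to a log resolution of $(D_1, (D-D_1)_{|D_1}+\Delta)$, and the discrepancy inequalities defining dlt for the ambient pair push down to the corresponding inequalities for the restricted pair along the exceptional locus, using that $D_1$ is normal and that the formation of the different is compatible with restriction. I expect the main technical obstacle to be purely bookkeeping in nature rather than conceptual: namely, tracking which divisorial valuations on $D_1$ are exceptional over $D_1$ versus divisorial on $D_1$, and checking that the dlt discrepancy condition (strict inequality $> -1$ on the exceptional part, $\ge -1$ allowed on non-exceptional components with coefficient $1$) is inherited correctly — in particular that $\Delta$ does not acquire a coefficient $\ge 1$. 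All of this is contained in the cited results of Koll\'ar and Corti, so the proof is essentially a matter of assembling those statements in the form needed for the induction; I would therefore keep the argument short and refer to \cite[\S 16]{Ko} and \cite[\S 3.9]{Co} for the detailed verifications.
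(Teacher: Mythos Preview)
Your proposal is correct and matches the paper's approach exactly: the paper does not give an independent proof of this lemma but instead cites \cite[Propositions 3.9.2 and 3.9.4]{Co} and \cite[Proposition 16.6, 16.7]{Ko} for the statement and proof, together with \cite[Corollary 5.52]{KM} for the normality of $D_1$ and the transversality observation. Your outline simply unpacks what those references contain, which is entirely appropriate here.
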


\begin{remark}
For a log smooth pair $(X, D)$, it is easily seen that if %the pair
$(X, D)$ is Brody (resp.~Mori) hyperbolic
then so are the pairs $(D_1, (D-D_1)_{| D_1})$
and $(F, D_{|F})$,
where $D_1$ is any irreducible component of $D$
and $F$ is a connected general fibre of
a fibration $X \to Y$ onto a normal variety $Y$.
By Lemma \ref{dlt}, the
same claim holds if $(X, D)$ is dlt instead of log smooth
(used in Lemma \ref{restr}).
We also note that if $(X, D)$ is dlt then so is $(F, D_{|F})$.
Indeed, take a resolution $\tau: X' \to X$ and the induced
fibration $X' \to Y$ with general fibre $F'$.
Then $K_{F'} = K_{X' | F'}$ by adjunction.
Since $F$ is normal and chosen to meet transversally
the singular locus $\Sing X$ of $X$ at the general points of
$\Sing X$ and since $\tau_*K_{F'} = K_F$, we also have $K_{X | F} = K_F$.
We can now compare the discrepancies of $(X, D)$
and $(F, D_{|F})$ to see that the latter is still dlt
(e.g.~by applying \cite[Lemma 5.17(1)]{KM}).
\end{remark}

\section{Main results for singular pairs}\label{statement}

Let $X$ be a normal projective variety and $D$ a reduced Cartier divisor on $X$.
The pair $(X, D)$ is called {\it BH, or MH with respect to a $($Cartier$)$
decomposition} of $D = \sum_{i=1}^s D_i$,
if  each $D_i$ is a reduced
(Cartier) divisor on $X$, not necessarily irreducible,
and both $X \setminus D$ and $(\cap_{i \in I} D_i) \setminus (\cup_{j \in J} D_j)$
are respectively BH, or MH
for every partition of $\{1, \dots, s\} = I \coprod J$.
%If this is so with respect to a Cartier decomposition, we
%will call the respective notions of hyperbolicity {\it stratified}.
Note that, for a log smooth pair $(X, D)$,
the respective definitions of hyperbolicity given in \S 1.1
are equivalent to the
definitions of hyperbolicity above with respect to the irreducible
decomposition of $D$.

Theorem \ref{ThF} below
includes Theorems \ref{ThF'} as a special case.

\begin{theorem}\label{ThF}
Let $X$ be a
projective variety of dimension $n$,
$D$ a reduced divisor that is the sum of a
collection $\mathcal D=\{D_i\}_{i=1}^s$ of reduced
Cartier divisors
for some $s \ge 0$
and $\Gamma$
an effective Weil $\Q$-divisor on $X$ such that the pair $(X, D + \Gamma)$
is divisorial log terminal.
Then the closure $\NE(X)$ of effective $1$-cycles on $X$ is generated by
the $(K_X + D + \Gamma)$-non negative part and
at most a countable collection of
extremal $\mathcal{D}$-rational curves
$\{\ell_i\}_{i \in N}, \, N \subseteq \Z$:
$$\NE(X) = \NE(X)_{K_X + D + \Gamma \ge 0} +
\sum_{i \in N} \R_{> 0}[\ell_i] .$$
Furthermore,
$-\ell_i . (K_X + D+\Gamma)$ is in $(0, 2n]$,
and is even in $(0, 2)$
if $\ell_i$ is not $\mathcal{D}$-{\rm compact}.
In~particular, if the pair $(X,D)$ is Mori hyperbolic with respect to the
Cartier decomposition $D = \sum_{i=1}^s D_i$ then $K_X+D+\Gamma$ is nef.
\end{theorem}

To state our general results on ampleness,
we will need to refer to the following two
standard conjectures on the structure of algebraic varieties.

\begin{conjecture}\label{ConjAA}{\rm\bf Abundance($l$):}
Let $(X, D)$ be an $l$-dimensional log smooth projective pair
whose log canonical divisor $K_X + D$ is nef.
Then some positive multiple of $K_X+D$ is base point free,
i.e. $K_X+D$ is semi-ample.
\end{conjecture}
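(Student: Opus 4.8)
The plan is to prove Abundance($l$) by induction on $l$, splitting according to the numerical dimension $\nu = \nu(K_X+D)$ of the nef divisor $K_X+D$; the goal is to show $K_X+D$ is semi-ample, which amounts to showing that $\kappa(K_X+D)$ equals $\nu(K_X+D)$ together with the resulting Iitaka fibration being a morphism realizing semi-ampleness. A preliminary remark: since $D$ is reduced, $(X,D)$ is log canonical but fails to be klt along $D$, so throughout one must invoke the log-canonical (Ambro--Fujino) versions of the Kawamata--Viehweg vanishing and Kawamata--Shokurov basepoint-free theorems rather than their klt counterparts.

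First I would dispose of the two extreme cases. If $\nu = l$, then $K_X+D$ is nef and big and the basepoint-free theorem for lc pairs applies directly, giving semi-ampleness; as $(X,D)$ is log smooth this step is unconditional. If $\nu = 0$, I would first establish log non-vanishing, $\kappa(K_X+D) \ge 0$: by restricting to the components of $D$ via adjunction and applying the inductive hypothesis in lower dimension (together with minimal-model-program input), one reduces to the case $D = 0$, where Kawamata's non-vanishing circle of ideas applies; a nef divisor with $\nu = 0$ and $\kappa \ge 0$ is $\Q$-linearly trivial up to torsion, hence semi-ample.

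The substantive case is $0 < \nu < l$, and the strategy there is: (i) prove non-vanishing as above, so $\kappa(K_X+D) \ge 0$; (ii) prove the numerical and the usual Kodaira dimension coincide, $\kappa(K_X+D) = \nu(K_X+D)$, which is the heart of abundance; (iii) pass to a log resolution, take an Iitaka model $f \colon X' \to Y$ of $K_X+D$, and apply the Fujino--Mori canonical bundle formula $K_{X'}+D' \sim_\Q f^*(K_Y + B_Y + M_Y)$ with $M_Y$ nef and $(Y,B_Y)$ an lc pair of dimension $< l$; (iv) apply Abundance($\dim Y$) to the pair on $Y$ and transport semi-ampleness back up. I expect step (ii) — equality of $\kappa$ and $\nu$ in intermediate numerical dimension — to be the \emph{main obstacle}: it is exactly the portion of the abundance conjecture that is open in dimension $l \ge 4$, even when $D = 0$, and no general technique is known. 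For $l \le 3$, however, all of (i)--(iv) can be carried out using Miyaoka's and Kawamata's results on threefolds and their log refinements (Keel--Matsuki--McKernan, Fujino), so Abundance($l$) holds unconditionally for $l \le 3$ and is a safe hypothesis to invoke in the sequel.
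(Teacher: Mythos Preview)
The statement you are trying to prove is labelled \emph{Conjecture} in the paper, and the paper offers no proof of it. The authors invoke Abundance($l$) only as a standing hypothesis in Theorems~\ref{ThE'}, \ref{ThE}, \ref{ThD}, and remark (with a reference to \cite{KeMaMc}) that it is known for $l \le 3$; beyond that it is treated as an open problem. So there is no ``paper's own proof'' to compare against.

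Your proposal is an honest sketch of the standard strategy toward abundance, and you yourself identify the genuine gap: step~(ii), the equality $\kappa(K_X+D)=\nu(K_X+D)$ in the intermediate range $0<\nu<l$, is exactly the part that is open in dimension $\ge 4$. Since you have no argument for this step, what you have written is not a proof of Abundance($l$) in general but rather a reduction to the known hard core of the conjecture. Your concluding sentence --- that Abundance($l$) holds for $l\le 3$ and is otherwise a hypothesis --- is precisely how the paper uses it, so in that sense you and the paper agree; but do not present the outline as a proof of the conjecture itself.
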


\begin{conjecture}\label{ConjAAA} {\rm\bf CY($m$):}
Let $X$ be an $m$-dimensional simply connected
smooth projective variety with trivial canonical line bundle,
i.e. a Calabi-Yau manifold.
Then $X$ contains a rational curve.
\end{conjecture}

We remark that {\rm  Abundance($l$)} is known to hold
for $l \leq 3$ (even for dlt or lc pairs, see \S \ref{setup1})
and {\rm CY($m$)} is known to hold
for $m\leq 2$ (see \cite{KeMaMc}, \cite{MM}).
Since the two conjectures are known for $r = 3$,
Theorem \ref{ThEE''} is an immediate corollary of Theorem \ref{ThE'} (or \ref{ThE}) below.
\\[-6mm]

\begin{theorem}\label{ThE'}
Let $X$ be a smooth projective variety of dimension $n$
and $D$ a reduced
divisor on $X$ with simple normal crossings.
Fix an $r$ in $\{1, 2, \dots, n\}$.
Assume that {\rm Abundance($l$)} holds
for $l \leq r$  and that
$D$ has at least $n-r+1$
irreducible components amongst which at least $n-r$ are ample.
If $K_X+D$ is not ample, then either
it has non-positive degree on a $\mathcal{D}$-rational curve
or on a $\mathcal{D}$-algebraic $1$-torus,
or some positive multiple of
it is linearly equivalent to the trivial divisor on
a smooth $\mathcal{D}$-$\Q$-CY
variety $T$ of dimension $< r$.
We can take $T$ to be a $\Q$-torus if
further {\rm CY($m$)} holds for $m<r$.

In particular, if {\rm Abundance($l$)}
and {\rm CY($m$)} hold for $l \leq r$ and $m<r$
and the pair $(X,D)$ is Brody hyperbolic then $K_X+D$ is ample.
\end{theorem}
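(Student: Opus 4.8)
The plan is to organise the argument around whether $K_X+D$ is nef, reducing to Theorem~\ref{ThF'} in the non-nef case and to Abundance at level $r$ in the nef case. First I would observe that if $K_X+D$ is not nef, then Theorem~\ref{ThF'} already supplies an extremal $\mathcal{D}$-rational curve $\ell$ with $(K_X+D)\cdot\ell<0$, i.e. a $\mathcal{D}$-rational curve of non-positive degree; this settles the first alternative. So assume $K_X+D$ is nef but not ample. By the Nakai--Moishezon criterion there is then a subvariety on which $K_X+D$ is not big, and in particular a curve $C$ with $(K_X+D)\cdot C=0$; the locus swept out by such trivial curves is exactly where ampleness fails, and the whole task is to classify these ``trivial'' directions and the loci they fill out.

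Write $D=A+B$ with $A=A_1+\dots+A_{n-r}$ the ample components and $B\ne0$ the rest, and put $S=A_1\cap\dots\cap A_{n-r}$, which is smooth of dimension exactly $r$ by the simple normal crossing hypothesis together with the positivity of the $A_i$. The ample components enter in two complementary ways. First, any trivial curve $C$ satisfies $(K_X+(D-A_i))\cdot C=-A_i\cdot C<0$ for each $i$, so such a $C$ is negative for a pair with smaller boundary; this is the mechanism by which a trivial direction not confined to $S$ ought to sweep out a uniruled locus and, after a logarithmic bend-and-break respecting all of $D$, break into a $\mathcal{D}$-rational curve or a $\mathcal{D}$-algebraic $1$-torus of non-positive $(K_X+D)$-degree. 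Second, any positive-dimensional $\mathcal{D}$-compact subvariety is disjoint from the ample $A_i$, hence contained in each of them and so in $S$; consequently the Calabi--Yau alternative can only be produced inside $S$, where by adjunction $(K_X+D)|_S=K_S+B|_S$ is the log canonical divisor of the $r$-dimensional log smooth pair $(S,B|_S)$, to which Abundance$(r)$ applies.

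Restricting attention to $S$, I would apply Abundance$(r)$ to the nef divisor $K_S+B|_S$ to make it semi-ample, with associated morphism $g\colon S\to Z$. If $g$ is finite the restriction is ample and there is nothing in $S$ to analyse; otherwise a positive-dimensional fibre $F$ of dimension $<r$ satisfies $(K_S+B|_S)|_F\equiv0$, whence $K_F+B|_F\equiv0$ by adjunction along the fibre. Now I invoke the structure of log-canonically trivial pairs: if $B|_F\ne0$, then $(F,B|_F)$ is a non-trivial log Calabi--Yau pair and the logarithmic cone theorem on $F$ produces a $\mathcal{D}$-rational curve or a $\mathcal{D}$-algebraic $1$-torus of $(K_X+D)$-degree $0$; if $B|_F=0$, then $F$ is $\mathcal{D}$-compact with $K_F\equiv0$, so by Abundance $K_F$ is torsion and $F$ is a smooth $\mathcal{D}$-$\Q$-CY variety of dimension $<r$, the third alternative. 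Finally, assuming CY$(m)$ for $m<r$, the Beauville--Bogomolov decomposition of a finite \'etale cover of $F$ either exhibits an abelian factor, allowing $F$ to be replaced by a $\mathcal{D}$-$\Q$-torus, or exhibits simply connected Calabi--Yau (or hyperk\"ahler) factors carrying rational curves, which descend to $\mathcal{D}$-rational curves and return us to the first alternative.

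The final assertion is then immediate: if $(X,D)$ is Brody hyperbolic it is Mori hyperbolic, so $K_X+D$ is nef by Theorem~\ref{ThF'}; and Brody hyperbolicity forbids both $\mathcal{D}$-rational curves and $\mathcal{D}$-algebraic $1$-tori (each carries a nonconstant entire curve into a stratum minus its boundary, via $\C\to\C$ or $\C\to\C^{*}$), while under CY$(m)$ the $\mathcal{D}$-$\Q$-torus $T$ likewise admits such entire curves; all three alternatives being excluded, $K_X+D$ is ample. The step I expect to be the main obstacle is the interface between the two roles of the ample components: showing that a trivial direction transverse to $S$ really breaks into a curve that is at once genuinely $\mathcal{D}$-rational (or a $\mathcal{D}$-algebraic $1$-torus) and of non-positive $(K_X+D)$-degree. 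Since $K_X+D$ is already nef, Theorem~\ref{ThF'} is vacuous on $X$ itself, so this demands a logarithmic bend-and-break that remains effective by exploiting the ample boundary $A$ while simultaneously controlling the intersection of the broken curve with \emph{all} of $D$; carrying this out, together with the careful dimension bookkeeping that confines the Calabi--Yau locus to $S$ and keeps its dimension strictly below $r$, is the technical heart of the argument.
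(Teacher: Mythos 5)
Your skeleton matches the paper's strategy at the top level (non-nef case via Theorem~\ref{ThF'}; nef case via Abundance($r$); ample components confining the Calabi--Yau alternative to deep strata; Beauville--Bogomolov plus CY($m$) at the end; the same final hyperbolicity deduction), but the two steps you defer are exactly where the proof lives, and your sketches of them would fail as written. First, the step you yourself call the ``technical heart'' --- producing a $\mathcal{D}$-rational curve or $\mathcal{D}$-algebraic $1$-torus from a trivial direction not confined to $S$ --- is not done by bend-and-break in the paper, and your entry point is already off: Kleiman's criterion for a nef, non-ample divisor yields only a nonzero \emph{class} in $\NE(X)$ on which $K_X+D$ vanishes, not an actual curve $C$ with $(K_X+D)\cdot C=0$. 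The paper's Lemma~\ref{amp} handles this by perturbing the boundary by a small ample part supported on the ample component (so the trivial class becomes negative for the perturbed log divisor), invoking the cone theorem (Fujino) to extract an honest extremal rational curve $\ell''$, and then --- the key input --- Kawamata's bound on the length of extremal rays (Lemma~\ref{length}) applied to the contraction of $\R_{>0}[\ell'']$: since no curve of the exceptional locus meeting $D$ can be contracted (such a curve would lie in $D$, a case disposed of separately), the exceptional divisor drops dimension by exactly one under the contraction and is covered by curves with $-\ell''.(K_X+\Gamma)\le 2$, hence $\ell''.D\le 2$, and the Cartier-ness of $D$ then forces $\ell''$ to be a $\mathcal{D}$-rational curve or $\mathcal{D}$-algebraic $1$-torus. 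Moreover the subcase where the extremal curve lies \emph{inside} a boundary component is settled by induction on dimension via adjunction, which is why the paper proves the stronger dlt statement with a fractional boundary $\Gamma$ (Theorem~\ref{ThE} via Lemmas~\ref{dlt} and \ref{restr}): the smooth statement you are proving is not inductively self-contained, and your one-shot restriction to $S=\bigcap_i A_i$, as opposed to the paper's iterated restriction one ample component at a time, sets up no usable induction for this subcase.

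Second, your analysis inside $S$ omits the hardest case, where the semi-ample map $g$ is birational but not finite (the paper's Case~(III), $K_S+B|_S$ nef and big). There your adjunction $(K_S+B|_S)|_F = K_F+B|_F$ is invalid: $F$ is a special fibre of a birational contraction, not a general fibre of a fibration, so $K_S|_F \ne K_F$. The paper instead shows the contracted locus is disjoint from the boundary and uniruled, using Hacon--McKernan rational chain connectedness for the small-contraction case and Lemma~\ref{divcontr} (dlt blowups, the negativity lemma, BDPP, Miyaoka--Mori) for the divisorial case, producing $\mathcal{D}$-compact rational curves of zero $(K_X+D)$-degree. Relatedly, in your branch $B|_F\ne 0$ the appeal to ``the logarithmic cone theorem on $F$'' yields nothing, since $K_F+B|_F$ is numerically trivial and admits no negative extremal rays; the paper's move there is again adjunction to a component of the boundary and descent in dimension --- and it is precisely this forced extra restriction (the boundary on the $r$-dimensional stratum is nonzero, by the hypothesis of $n-r+1$ components together with ampleness, a point you assert for fibres but never verify for $F=S$ when $\dim Z=0$) that guarantees the $\Q$-CY variety $T$ has dimension strictly less than $r$.
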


Theorems \ref{ThE} and \ref{ThH} below
include Theorems \ref{ThE'} (and \ref{ThEE''}) and \ref{ThH'} as special cases.
%\\[-3mm]

One may take $\Gamma = 0$ in Theorems $\ref{ThF}, \ref{ThE}$ and $\ref{ThD}$.
Such a $\Gamma$ naturally appears in our inductive procedure as the `different'
in the adjunction formula Lemma \ref{dlt}.
See Remark \ref{PropD} for the case where $D$ is not Cartier.
We remark that
the cone theorem, which we use, has been generalized by Ambro and Fujino
to log canonical pairs, see \cite{Am}, \cite{Fu2}.

Note that {\rm Abundance($l$)} (for dlt pairs)
and {\rm CY($m$)}
always hold for $l \leq 3$ and $m\leq 2$ (\cite{KeMaMc}, \cite{MM}) and that a normal K3 surface has infinitely many
elliptic curves (\cite{MM}).

\begin{theorem}\label{ThE}
Let $X$ be a
projective variety of dimension $n$,
$D$ a reduced Weil divisor and $\Gamma$
an effective Weil $\Q$-divisor on $X$ such that the pair $(X, D + \Gamma)$
is divisorial log terminal $($dlt$)$.
Assume one of the following three conditions.
\begin{itemize}
\item[(1)]
$n \le 2$ and $D$ is Cartier (cf. Remark \ref{PropD} for the case with $D$ non-Cartier).
\item[(2)]
$n = 3$, $D$ is nonzero and Cartier.
\item[(3)]
$n \ge 4$, the pair $(X, D + \lfloor \Gamma \rfloor)$ is log smooth,
there is an $r \in \{1, 2, \dots, n\}$ such that
$D$ has at least $n-r+1$
irreducible components amongst which at least $n-r$ are ample
and {\rm Abundance($l$)} $($for dlt pairs$)$
holds for $l \leq r$.
\end{itemize}
Then $K_X+D+\Gamma$ is ample, unless either
$(K_X + D + \Gamma)$ has non-positive degree on a
$\mathcal{D}$-rational curve or on a $\mathcal{D}$-algebraic $1$-torus,
or the restriction $(K_X + D + \Gamma)_{| T} \sim_{\Q} 0$
for a $\mathcal{D}$-$\Q$-CY variety $T$.
When $T$ is singular, it is
a normal $K3$ surface and $n\in \{2, 3\}$.
Otherwise, $T$ can be taken to be an abelian surface
or an elliptic curve in Cases~$(1)$,~$(2)$ and,
if further {\rm CY($m$)} holds for $m<r$,
taken to be a $\Q$-torus with $ \dim T < r$
in Case~$(3)$.

In particular, if the pair $(X, D)$ is Brody hyperbolic
then $K_X+D+\Gamma$ is ample, provided that either $n \le 3$ or
{\rm CY($m$)} holds for $m<r$.
\end{theorem}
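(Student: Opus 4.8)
The plan is to prove Theorem~\ref{ThE} by induction on $n = \dim X$, running the log minimal model program (LMMP) for the dlt pair $(X, D+\Gamma)$ and tracking what happens on each extremal contraction and each stratum of $D$ by adjunction via Lemma~\ref{dlt}. The base cases $n \le 2$ are handled by Proposition~\ref{PropD}: if $K_X+D+\Gamma$ is not ample and has strictly positive degree on every $\mathcal{D}$-rational curve and $\mathcal{D}$-algebraic $1$-torus, then $D=\Gamma=0$ and $X$ is a simple abelian surface or a normal $K3$ with finitely many Du Val singularities, giving the stated $\mathcal{D}$-$\Q$-CY variety $T$ (an abelian surface, an elliptic curve, or a normal $K3$); the case $n=3$ with $D$ nonzero Cartier will use Theorem~\ref{ThF} to produce an extremal $\mathcal{D}$-rational curve, and when $K_X+D+\Gamma$ is nef it will use Abundance($3$) to run an Iitaka-type fibration, reducing to adjunction on the general fibre and on the components $D_i$ (which are surfaces, handled by the $n\le 2$ case).

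For the inductive step $n \ge 4$ under hypothesis (3), I would first apply the cone theorem, Theorem~\ref{ThF}, to $(X, D+\Gamma)$: either $\NE(X)$ is $(K_X+D+\Gamma)$-nef, or there is an extremal $\mathcal{D}$-rational curve $\ell_i$ with $-\ell_i.(K_X+D+\Gamma) \in (0,2n]$. If some extremal curve has non-positive degree, or more carefully if after contracting $(K_X+D+\Gamma)$-negative extremal rays (which preserve the dlt property and the structure of $D$ as a sum of Cartier divisors up to the needed modifications, keeping the count of ample components under control — this is where the hypothesis on $n-r+1$ components with $n-r$ ample must be shown to be inherited or else to force the contraction to be of fibre type) we find a $\mathcal{D}$-rational curve of non-positive degree, we are done. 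Otherwise $K_X+D+\Gamma$ becomes nef on the output $X'$; then invoke Abundance($r$)-type reasoning: the hypothesis that $D$ has at least $n-r$ ample components forces $K_X+D+\Gamma$ to be big unless the Iitaka dimension drops, and when it drops we get a fibration $f: X' \to Y$ with $\dim Y < r$ (roughly, the ample components bound the fibre dimension), and restricting to a general fibre $F$, which is a $\mathcal{D}_F$-compact variety with $(K_{X'}+D'+\Gamma')|_F \sim_{\Q} K_F + (\text{different})$ trivial, we conclude $F$ is a $\Q$-CY variety of dimension $< r$; Abundance($l$) for $l \le r$ (applied on $F$) together with the hyperbolicity passing to $F$ (by the Remark following Lemma~\ref{dlt}) forces $F$ to contain no $\mathcal{D}$-rational curve, hence by CY($m$) for $m < r$ it is $\Q$-CY with no rational curves, which for a $\Q$-torus is consistent and otherwise contradicts CY($m$); so $T := F$ (or an étale cover) is the desired $\mathcal{D}$-$\Q$-torus. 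The remaining possibility, that some intersection stratum $\cap_{i\in I}D_i$ carries the relevant collapsing, is handled by adjunction (Lemma~\ref{dlt}) and the inductive hypothesis applied in dimension $n - |I| < n$.

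The final ``in particular'' statement then follows formally: Brody hyperbolicity of $(X,D)$ rules out, by definition, any $\mathcal{D}$-rational curve and any $\mathcal{D}$-algebraic $1$-torus (these contain $\C$ or $\C^*$ in a suitable stratum), so the only obstruction to ampleness left by the main dichotomy is the existence of a $\mathcal{D}$-$\Q$-CY variety $T$ with $(K_X+D+\Gamma)|_T \sim_{\Q} 0$; but a $\Q$-CY variety, being either a $\Q$-torus (hence containing $\C^*$-images, contradicting BH of the compact stratum) or — via Abundance and CY($m$) in the relevant dimension — containing a rational curve, is itself not Brody hyperbolic, and since $T$ is a $\mathcal{D}$-compact variety a non-constant $\C \to T$ (or $\C^* \to T$) would violate the BH hypothesis on the stratum containing $T$. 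This forces $K_X+D+\Gamma$ ample whenever $n \le 3$ (where Abundance($\le 3$) and CY($\le 2$) are unconditional) or whenever CY($m$) holds for $m < r$.

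The main obstacle I anticipate is controlling the boundary divisor $D+\Gamma$ through the LMMP steps: after a divisorial contraction or a flip, I must verify that the image pair stays dlt, that $D$ remains expressible as a sum of Cartier divisors with the prescribed number of ample components (or that losing this structure forces a fibre-type contraction whose fibres are then analyzed directly), and that extremal rays with negative $(K_X+D+\Gamma)$-degree genuinely produce $\mathcal{D}$-rational curves in the right strata rather than curves that escape the combinatorial bookkeeping — in short, ensuring the induction is set up so that every contraction either terminates the argument or strictly decreases a well-founded invariant while preserving all hypotheses of Theorem~\ref{ThE}.
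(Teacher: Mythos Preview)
Your overall strategy --- induction on $n$, Theorem~\ref{ThF} for nefness, then abundance and fibre analysis --- matches the paper in spirit, but there is a genuine gap in how you bridge dimension~$n$ and the abundance hypothesis, and a missing case in the endgame.

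\textbf{The reduction from $n>r$ to $n=r$.} You propose to run the LMMP on $(X,D+\Gamma)$ and then apply ``Abundance($r$)-type reasoning'' on the output $X'$. But $X'$ still has dimension $n$, and you are only assuming Abundance($l$) for $l\le r$, so you cannot invoke abundance on $X'$ directly. You try to compensate by asserting that the $n-r$ ample components of $D$ either survive the LMMP or force a fibre-type contraction with fibres of dimension $<r$; as you yourself flag in your final paragraph, ampleness of a divisor is simply not preserved under flips or divisorial contractions, and there is no mechanism in your sketch that actually produces the claimed dimension drop. This is not a technicality --- it is the heart of the argument, and the paper handles it by a completely different device. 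The paper does \emph{not} run the LMMP on $X$. Instead it proves Lemma~\ref{amp}: if some $D_1$ is ample (automatic when $n>r$), then $K_X+D+\Gamma$ is already ample on $X$ unless one of the excluded configurations occurs. The proof of Lemma~\ref{amp} takes a class $[\ell]\in\NE(X)$ with $\ell\cdot(K_X+D+\Gamma)=0$ (Kleiman), perturbs $D$ to $D_\varepsilon + \Delta_\varepsilon$ with $\Delta_\varepsilon$ ample using the ample $D_1$, finds a $(K_X+D_\varepsilon+\Gamma)$-negative extremal curve $\ell''$ still perpendicular to $K_X+D+\Gamma$, and then uses Kawamata's length bound (Lemma~\ref{length}) together with the Cartier-ness of $D$ to force $\ell''\cdot D\le 2$, hence $\ell''$ is a $\mathcal{D}$-rational curve or $\mathcal{D}$-algebraic $1$-torus. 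This single lemma disposes of the case $n>r$ outright and reduces to $n=r$, where Abundance($r$) = Abundance($n$) legitimately applies to $X$ itself.

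\textbf{The birational case of the abundance map.} Once $n=r$ and $K_X+D+\Gamma$ is nef, abundance gives $h:X\to Z$ with $K_X+D+\Gamma=h^*H$. Your sketch only discusses the case $\dim Z<\dim X$ (fibre $F$ is $\mathbb{Q}$-CY). You omit Case~(III), $\dim Z=\dim X$, i.e.\ $h$ birational with nonempty exceptional locus. Here the paper needs two further nontrivial inputs you do not mention: when every $h$-contracted curve has $\ell\cdot(K_X+\Gamma)\ge 0$, Hacon--McKernan's rational chain connectedness of fibres \cite[Corollary 1.5]{HM} gives a $\mathcal{D}$-compact rational curve; and when $h$ contracts a divisor, Lemma~\ref{divcontr} (proved via two dlt blowups, adjunction, and the negativity lemma) shows the general fibre of $h_{|S}$ is uniruled. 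Without these, the nef-and-big case is not closed off.
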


\begin{theorem}\label{ThD}
Let $X$ be a
projective variety of dimension $n$,
$D$ a reduced divisor that is the sum of a
collection $\mathcal D=\{D_i\}_{i=1}^s$ of reduced
Cartier divisors for some $s \ge 1$ and
$\Gamma$
an effective Weil $\Q$-divisor on $X$ such that the pair $(X, D + \Gamma)$
is divisorial log terminal.
Assume one of the following two conditions.
\begin{itemize}
\item[(1)]
$n = 4$, $s \ge 2$ and $D_1$ is irreducible and ample.
\item[(2)]
$n \ge 4$, $s \ge n-r +1$ for some $r \in \{1, 2, \dots, n\}$,
$D_j$ is ample for all $j \le n-r+1$ and {\rm Abundance($l$)}
$($for dlt pairs$)$ holds for $l \leq  r$.
\end{itemize}
Then $K_X + D + \Gamma$ is ample, unless
either $(K_X + D + \Gamma)$ has non-positive degree on a
$\mathcal{D}$-rational curve or on a $\mathcal{D}$-algebraic $1$-torus, or
the restriction
$(K_X + D + \Gamma)_{| T} \sim_{\Q} 0$
for some $\mathcal{D}$-$\Q$-CY
variety $T$ $( \dim T < r$ in Case$(2))$.
$T$ is smooth when
the pair
$(X, D + \lfloor \Gamma \rfloor)$ is log smooth
and can be taken to be a $\Q$-torus when further {\rm CY($m$)} holds for $m<r$.
\end{theorem}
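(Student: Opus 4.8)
The plan is to prove Theorem~\ref{ThD} by induction on the dimension $n$, reducing it to the more general Theorem~\ref{ThE} via an adjunction argument in which the ample Cartier component $D_1$ plays the role that a free linear system would play in a Bertini-type reduction. First I would invoke the Cone Theorem for the dlt pair $(X, D+\Gamma)$, namely Theorem~\ref{ThF}, to write $\NE(X) = \NE(X)_{K_X+D+\Gamma \ge 0} + \sum_{i \in N} \R_{>0}[\ell_i]$ with each $\ell_i$ an extremal $\mathcal D$-rational curve of length $-\ell_i.(K_X+D+\Gamma) \in (0,2n]$. If every such $\ell_i$ has $(K_X+D+\Gamma).\ell_i < 0$ then $K_X+D+\Gamma$ is ample by Kleiman's criterion unless the nef case intervenes, and we are in the first alternative of the conclusion; so the substance is to analyze what happens when some extremal ray $R = \R_{>0}[\ell]$ has $(K_X+D+\Gamma).\ell \le 0$ but $\ell$ is \emph{not} excluded, i.e. $\ell$ is $\mathcal D$-compact (lies in a stratum $\cap_{i\in I}D_i$ missing $\cup_{j\in J}D_j$) and is not itself a $\mathcal D$-rational curve or $\mathcal D$-algebraic $1$-torus. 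Because $D_1$ is ample, $D_1.\ell > 0$, so $\ell$ cannot lie in $D_1$; thus in the partition corresponding to $\ell$ the index $1$ lies in $J$, and the contraction of $R$ moves $D_1$ positively.

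Next I would contract the extremal ray $R$, obtaining $\varphi\colon X \to X'$, and split into the usual trichotomy: $\varphi$ is a divisorial contraction, a Mori fiber space, or a small contraction (followed by a flip). In the fiber-space case, the general fiber $F$ is a lower-dimensional variety on which $(K_X+D+\Gamma)_{|F} \sim_{\Q} K_F + (D+\Gamma)_{|F}$ is anti-ample, and since $F$ meets $D_1$ (as $D_1$ is ample hence horizontal), the pair $(F, D_{|F}+\Gamma_{|F})$ still has at least one ample Cartier component coming from $D_1|_F$ together with the remaining components; here I would apply the induction hypothesis (Theorem~\ref{ThD} in dimension $\dim F < n$, or Theorem~\ref{ThE} if the count of ample components drops), using that Abundance($l$) for $l \le r$ and CY($m$) for $m < r$ descend, and that the dlt property of $(F, D_{|F})$ is preserved by the Remark following Lemma~\ref{dlt}. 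In the birational case (divisorial or small/flip) the key point is to check that the hypotheses are inherited by $(X', D'+\Gamma')$: the image $D_1'$ of $D_1$ stays ample — this needs care, since ampleness is not automatically preserved, but $D_1$ is $\varphi$-ample and $D_1.\ell>0$ forces $D_1'$ to remain ample by the cone description of $\NE(X')$ — and $s$ does not decrease; one runs the LMMP for the pair, terminating (by Abundance in the relevant dimensions, or by the structure theory used throughout §\ref{statement}) at a minimal model or Mori fiber space, and reads off the conclusion there, pulling back the $\mathcal D$-$\Q$-CY variety $T$ or the offending rational curve / $1$-torus.

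The two structural outputs to track are: (a) if at the end $K_X+D+\Gamma$ becomes nef but not ample, abundance produces a contraction to a lower-dimensional base, and the generic fiber of the resulting morphism is a variety $T$ with $(K_X+D+\Gamma)_{|T}\sim_\Q 0$, i.e. a $\Q$-CY; the dimension bound $\dim T < r$ comes from the hypothesis that $n-r+1$ of the $D_j$ (for $j \le n-r+1$) are ample, hence horizontal, so they cut $T$ down — and being ample on $T$ and $\Q$-trivially intersecting $K_X+D+\Gamma$, a Lefschetz/intersection argument forces $\dim T$ to be small (this is where the ``$\dim T < r$'' is pinned down); (b) the log smoothness of $(X, D+\lfloor\Gamma\rfloor)$ propagates just enough to keep $T$ smooth, using that the strata are smooth and the construction of $T$ is as a general fiber or a general complete intersection of the $D_j$'s. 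Finally, if CY($m$) holds for $m<r$, a smooth $\Q$-CY of dimension $<r$ with a rational curve is, after passing to its abelian étale cover and using the Beauville–Bogomolov decomposition together with the absence of rational curves on the abelian and Calabi–Yau-with-$h^{2,0}=0$ factors, forced to be a $\Q$-torus — exactly as argued in the proof of Theorem~\ref{ThE}, which I would cite. The Brody-hyperbolic corollary is then immediate: a $\mathcal D$-$\Q$-torus $T$ carries $\C$ or $\C^*$ inside $T \setminus (T\cap D) = T$, contradicting hyperbolicity, as does a $\mathcal D$-rational curve or $\mathcal D$-algebraic $1$-torus by definition.

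The main obstacle I expect is controlling the ample component $D_1$ (and the count $s$) through the steps of the LMMP: after a flip or a divisorial contraction the strict transform of $D_1$ need not obviously stay ample or even $\Q$-Cartier, and one must argue — using that $\ell$ avoids $D_1$, that $D_1$ is $\varphi$-ample, and the inductive control of $\NE$ — that ampleness is restored, or else reduce to a lower stratum where fewer ample components suffice, matching the ``$\dim T < r$'' bookkeeping. A secondary technical point is verifying that the dlt and log-smoothness hypotheses, and Abundance/CY in the right ranges, are genuinely inherited at each inductive step; these are handled by the Remark after Lemma~\ref{dlt} and by the known cases of the conjectures, but they must be threaded carefully so that the induction closes.
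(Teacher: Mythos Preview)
Your proposal has a genuine structural gap. Once Theorem~\ref{ThF} has been applied, you may assume $K_X+D+\Gamma$ is nef (otherwise a $\mathcal D$-rational curve with negative degree already exists and you are done). But then there is no $(K_X+D+\Gamma)$-negative extremal ray left to contract, so the LMMP you propose to run simply halts; your sentence ``contract the extremal ray $R$'' has no content in this regime. Moreover, you invoke abundance on $X$ itself to get a fibration, but in Case~(2) abundance is only assumed in dimensions $\le r$, and the whole point of the hypotheses is that $n$ may be strictly larger than $r$; in Case~(1) with $n=4$ no abundance on $X$ is assumed at all. So the step ``abundance produces a contraction to a lower-dimensional base'' is unavailable exactly when the ample $D_1$ is supposed to be doing the work.

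The paper's proof uses the ample $D_1$ in a completely different way, via a perturbation trick (Lemma~\ref{amp}) rather than by tracking it through an LMMP. Assuming $K_X+D+\Gamma$ is nef but not ample, Kleiman gives a nonzero class $[\ell]\in\NE(X)$ with $\ell.(K_X+D+\Gamma)=0$. One then writes $D = D_\varepsilon + \Delta_\varepsilon$ with $\Delta_\varepsilon = \varepsilon_1 D_1 + \varepsilon_2 D$ ample and $(X,D_\varepsilon+\Gamma)$ still dlt; since $\ell.\Delta_\varepsilon>0$, the class $\ell$ is now $(K_X+D_\varepsilon+\Gamma)$-\emph{negative}, so the cone theorem for this perturbed pair produces a genuine extremal rational curve $\ell''$ with $\ell''.(K_X+D+\Gamma)=0$. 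Either $\ell''$ lies in some irreducible component $G$ of $D+\lfloor\Gamma\rfloor$, and one applies the inductive hypothesis to $(G,D_G+\Gamma_G)$ via Lemma~\ref{restr}; or $\ell''\not\subseteq D+\lfloor\Gamma\rfloor$, and then Kawamata's length bound (Lemma~\ref{length}) for the pair $(X,\Gamma)$ gives $\ell''.D\le 2$, forcing $\ell''$ to be a $\mathcal D$-rational curve or $\mathcal D$-algebraic $1$-torus with $(K_X+D+\Gamma)$-degree zero. This disposes of the case $n>r$ (and of Case~(1)) without ever running an LMMP on $X$ or invoking abundance there; only after this can one assume $n=r$, at which point abundance \emph{is} available on $X$ and the Iitaka fibration analysis proceeds. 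Your concern about preserving ampleness of $D_1$ through flips and divisorial contractions is a real obstruction to your approach, and the paper sidesteps it entirely by never needing that preservation.
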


\begin{theorem}\label{ThH}
Let $X$ be a $\Q$-factorial projective variety,
and $D$ a nonzero reduced Weil divisor with $D = \sum_{i=1}^s D_i$ the irreducible decomposition
such that the pair $(X,D)$ is divisorial log terminal.
Assume that no component of $D$ is uniruled
and $K_X + D$ is not pseudo-effective.
Then $D$ has exactly one irreducible component
and $X$ is dominated by $\mathcal D$-rational curves $\ell$
with $-\ell . (K_X + D) = 1$, $\ell \cong \PP^1$ and $\ell \setminus \ell \cap D \cong \C$
for a general $\ell$.
\end{theorem}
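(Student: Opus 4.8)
The plan is to deduce both parts from the cone theorem, Theorem~\ref{ThF}, by running the log minimal model program (LMMP) for $K_X+D$. For~(1), assume $K_X+D$ is not pseudo-effective and that no $D_i$ is uniruled. As $(X,D)$ is dlt with $X$ $\Q$-factorial, I would run a $(K_X+D)$-LMMP with scaling; since $K_X+D$ is not pseudo-effective it terminates with a Mori fibre space $f\colon X'\to Y$, $\dim Y<n$, where $(X',D')$ is dlt with $D'=\pi_*D$ for the induced birational map $\pi\colon X\ratmap X'$, $\rho(X'/Y)=1$, and $-(K_{X'}+D')$ is $f$-ample. First, no component of $D$ is contracted: the exceptional divisor of a divisorial contraction in a $(K_X+D)$-LMMP is uniruled, being covered by rational curves from the contracted extremal ray (spanned by a $\mathcal D$-rational curve by Theorem~\ref{ThF}), while flips are isomorphisms in codimension one; so each $D_i$ survives as a prime divisor $D_i'$ and $D'=\sum_{i=1}^s D_i'$. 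Second, and crucially, every $D_i'$ dominates $Y$: otherwise, writing $Z=\overline{f(D_i')}\subsetneq Y$, the set $f^{-1}(Z)$ has dimension $n-1$ (a proper closed subset containing the prime divisor $D_i'$), so for general $y\in Z$ the divisor $D_i'$ fills out top-dimensional components of $f^{-1}(y)$; these are covered by rational curves, the fibres of the Mori fibration $f$ being so by bend and break since $-(K_{X'}+D')$ is $f$-ample, whence $D_i'$ is uniruled --- contradicting the hypothesis.

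Let $F$ be a general fibre of $f$. Then $(F,D'|_F)$ is dlt (as noted after Lemma~\ref{dlt}) and, by adjunction for a general fibre, $-(K_F+D'|_F)=-(K_{X'}+D')|_F$ is ample; so $(F,D'|_F)$ is log Fano and $F$ is rationally connected (rational connectedness of log Fano pairs, due to Zhang and to Hacon--McKernan). Each $D_i'\cap F$ is a nonempty reduced divisor on $F$, and any one of its components is again log Fano by adjunction (Lemma~\ref{dlt}) applied to $(F,D'|_F)$, hence rationally connected; were it positive-dimensional it would be uniruled and, as $F$ varies, would force $D_i'$ uniruled. Therefore $\dim(D_i'\cap F)=0$, which forces $\dim F=1$; thus $F\cong\PP^1$ and $\dim Y=n-1$. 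Now $0<\deg\bigl(-(K_F+D'|_F)\bigr)=2-\sum_i\#(D_i'\cap F)$ with each $\#(D_i'\cap F)\ge 1$, which forces $s=1$ and $D_1'\cdot F=1$. Finally, a general fibre $\ell:=F$ avoids the codimension-$\ge 2$ locus where $\pi^{-1}$ is not an isomorphism, so $\pi^{-1}$ is an isomorphism near $\ell$; transporting $\ell$ back to $X$ yields a curve with $\ell\cong\PP^1$, $\ell\cdot D=1$, $\ell\setminus(\ell\cap D)\cong\C$, and $-\ell\cdot(K_X+D)=2-1=1$, and such $\ell$ dominate $X$ because the general fibres of $f$ dominate $X'$. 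This proves~(1).

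For~(2), assume $X$ has canonical singularities and is dominated by $\mathcal D$-rational curves; a dominating family of these must consist of rational curves $\ell$ in the open stratum $X$, whose normalization $\nu\colon\PP^1\to\ell$ satisfies $\#\nu^{-1}(\ell\cap D)\le 1$. Take such a covering family and a general member $\ell$. Lifting to a resolution $\mu\colon\widetilde X\to X$ --- possible since $\PP^1$ is a smooth curve --- the strict transform $\widetilde\ell$ is a free rational curve, so $-K_{\widetilde X}\cdot\widetilde\ell\ge 2$; since $X$ has only canonical singularities, $-K_X\cdot\ell\ge -K_{\widetilde X}\cdot\widetilde\ell\ge 2$, while generic transversality of a member of a covering family with respect to the reduced divisor $D$ gives $D\cdot\ell\le 1$. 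Hence $(K_X+D)\cdot\ell\le-1<0$. Since $\ell$ moves in a family covering $X$, and a pseudo-effective $\Q$-Cartier divisor has non-negative degree on the class of any such curve (the support of an effective divisor cannot contain a general member of a covering family, and one passes to the limit), $K_X+D$ is not pseudo-effective.

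The main obstacle is controlling the effect of the LMMP on $D$ in~(1): one must ensure that no component of $D$ is contracted, and that upon reaching the Mori fibre space no component of $D$ is vertical --- both excluded by the assumption that no $D_i$ is uniruled, but the latter hinges on the fact that the top-dimensional components of the fibres of a Mori fibration are covered by rational curves. The other essential non-elementary ingredient is the rational connectedness of (possibly non-klt) log Fano pairs; in~(2) the comparatively soft point is the generic transversality giving $D\cdot\ell\le 1$.
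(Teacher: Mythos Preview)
Your argument for~(1) is correct and follows the same overall strategy as the paper: run a $(K_X+D)$-LMMP with scaling to reach a Mori fibre space $\gamma:W\to Y$ with $(W,B)$ dlt and $B=\pi_*D$, observe that no $D_i$ is contracted since exceptional loci are uniruled, and then force $\dim Y=n-1$ and $s=1$ by showing each $B_i$ is generically finite over $Y$. The paper does this last step in one stroke by adjunction: if $\gamma_{|B_1}$ is not generically finite, a general curve $\ell$ in its general fibre satisfies $\ell\cdot(K_{B_1}+\Delta_1)\le\ell\cdot(K_W+B)_{|B_1}<0$, so $K_{B_1}+\Delta_1$ is not pseudo-effective and $B_1$ is uniruled. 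Your two-step route (first dominance of $Y$, then zero-dimensional fibre intersection via rational connectedness of log Fano pairs) reaches the same conclusion but invokes heavier machinery where elementary adjunction suffices; also, your appeal to ``bend and break'' for special fibres of the Mori fibration is imprecise --- bend and break concerns $K$-negative, not $(K+D)$-negative, curves --- though the needed uniruledness of fibres does follow from \cite{HM}.

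Your argument for~(2), however, has a genuine gap. The claim that generic transversality yields $D\cdot\ell\le 1$ is false in general: take $X=\PP^2$, $D$ a smooth conic, and the family of tangent lines to $D$. Each tangent line $\ell$ meets $D$ set-theoretically in a single point, so $\ell\setminus D\cong\C$ and $\ell$ is a $\mathcal D$-rational curve; these lines dominate $\PP^2$; yet $D\cdot\ell=2$, and \emph{no} member of this family is transverse to $D$. The point is that the $\mathcal D$-rational condition constrains precisely the incidence with $D$, so one cannot invoke generic transversality against $D$ within such a family. (In this example $(K_X+D)\cdot\ell=-1$, so the conclusion still holds, but not via your separate estimates $-K_X\cdot\ell\ge 2$ and $D\cdot\ell\le 1$.) The paper does not give a proof of~(2) either, simply citing \cite[Lemma~5.11]{KMc}; the argument there uses the log tangent sheaf: deformations of $\nu:\PP^1\to X$ inside the family produce sections of $\nu^*T_X(-\log D)$ generating it at a general point, which gives $-(K_X+D)\cdot\nu_*[\PP^1]>0$ directly, without decoupling $K_X$ from $D$.
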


\noindent
For the converse direction of Theorem \ref{ThH}, see \cite[Lemma 5.11]{KMc}.

\par \vskip 1pc \noindent
{\bf N.B.} In Theorem \ref{ThD}, $D_1$ needs to be irreducible
so that Theorem \ref{ThE} (2) can be applied:
if $D_1 = D_{11} + D_{12}$ is reducible
then the restriction $D_{2 |D_{11}}$ might be zero.
{\it The ampleness assumption on
some $D_i$'s in various theorems is due to the same reason.}

\section{Proofs of the theorems in Section~\ref{statement}}

%
%
%
%

%In this section, we prove the results in \S \ref{statement}.\\[-3mm]

Let $n \ge 1$.
Let $(X, D + \Gamma)$
be a pair as in Theorem {\rm \ref{ThF} (resp.~\ref{ThE} (3), or \ref{ThD} (2))}
but with $\dim X = n+1$.
Let $Y$ be an irreducible component of $D + \lfloor \Gamma \rfloor$.
By
Lemma \ref{dlt},
$$(K_X + D + \Gamma)_{| Y} = K_{Y} + (D + \lfloor \Gamma \rfloor - Y)_{| Y}  + \Delta
= K_Y + D_Y + \Gamma_Y$$
where $\Delta \ge 0$ and the pair $(Y, D_Y + \Gamma_Y)$ is dlt. Here,
if a Cartier decomposition
$D = \sum_{i=1}^s D_i$ is involved, we define
$D_Y := (\sum_{i \ne k} D_i)_{| Y}$ and
$\Gamma_Y := (D_k - Y + \lfloor \Gamma \rfloor)_{| Y} + \Delta$
when $Y \subseteq D_k$ (resp.
$D_Y := D_{| Y}$ and $\Gamma_Y := (\lfloor \Gamma \rfloor - Y)_{| Y} + \Delta$
when $Y \subseteq \lfloor \Gamma \rfloor$).
Otherwise, we define
$D_Y := (D - Y)_{| Y}$ and $\Gamma_Y := \lfloor \Gamma \rfloor_{| Y} + \Delta$
when $Y \subseteq D$ (resp.
$D_Y := D_{| Y}$ and $\Gamma_Y := (\lfloor \Gamma \rfloor - Y)_{| Y} + \Delta$
when $Y \subseteq \lfloor \Gamma \rfloor$).

One can easily verify:

\begin{lemma}\label{restr}
With the above assumption and notation,
the pair $(Y, D_Y + \Gamma_Y)$
satisfies the respective conditions in Theorems $\ref{ThF}$, $\ref{ThE} \, (3)$
and $\ref{ThD} \, (2)$, and $\dim Y = n$.
\end{lemma}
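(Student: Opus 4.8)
The plan is to verify, case by case according to which theorem we are reducing from, that the restriction pair $(Y, D_Y + \Gamma_Y)$ inherits the relevant hypotheses. The dlt property of $(Y, D_Y+\Gamma_Y)$ and the identity $(K_X+D+\Gamma)_{|Y} = K_Y+D_Y+\Gamma_Y$ are already furnished by Lemma \ref{dlt} (applied with the component $Y$ of $D+\lfloor\Gamma\rfloor$), so what remains is purely bookkeeping on the combinatorial and positivity data: the number of components, their ampleness, the Cartier condition, the log smoothness, and the dimension count $\dim Y = n$. The last is immediate since $Y$ is a prime divisor in the $(n+1)$-dimensional $X$.

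First I would dispose of the dimension and Cartier bookkeeping. For Theorem \ref{ThF} one only needs that $D_Y$ is again a sum of reduced Cartier divisors on $Y$: when $Y \subseteq D_k$ this is $D_Y = (\sum_{i\ne k} D_i)_{|Y}$, a restriction of Cartier divisors meeting $Y$ properly (by the transversality at general points guaranteed by the dlt hypothesis, cf.\ the discussion before Lemma \ref{dlt}), hence reduced and Cartier on $Y$ with $s-1$ terms; when $Y \not\subseteq D$ it is $D_{|Y}$ with all $s$ terms surviving. For Theorem \ref{ThD}(2) one additionally tracks ampleness: restriction of an ample Cartier divisor to the subvariety $Y$ is ample, so if $D_1,\dots,D_{n-r+1}$ are ample on $X$ then, after removing at most the one index $k$ with $Y\subseteq D_k$, at least $n-r$ of the $D_{i|Y}$ are ample Cartier divisors on $Y$; since the ambient dimension has dropped by one, $n-r = (\dim Y) - r + 1$, matching the hypothesis of Theorem \ref{ThE}(3)/\ref{ThD}(2) in dimension $\dim Y$. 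The log smoothness of $(Y, D_Y+\lfloor\Gamma_Y\rfloor)$ needed in Theorem \ref{ThE}(3) follows because $Y$ is a component of the snc divisor $D+\lfloor\Gamma\rfloor$ on the smooth $X$, so $Y$ is smooth and the restrictions of the remaining components form an snc divisor on $Y$; one must check that $\lfloor\Gamma_Y\rfloor$ is exactly the restriction of the remaining integral components, i.e.\ that the fractional contribution $\Delta$ from Lemma \ref{dlt} together with the fractional part of $\Gamma_{|Y}$ contributes no new integral part, which is the content of the adjunction formula in the log smooth case.

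The one genuinely delicate point — and the step I expect to be the main obstacle — is that the \emph{hyperbolicity}-type conclusions, phrased in the theorems in terms of $\mathcal D$-rational curves, $\mathcal D$-algebraic $1$-tori, and $\mathcal D$-$\Q$-CY varieties, must be matched correctly under restriction: a $\mathcal D_Y$-stratum of $D_Y$ inside $Y$ is a $\mathcal D$-stratum of $D$ inside $X$ (one simply adjoins the index $k$ to the index set $I$ when $Y\subseteq D_k$), and likewise the complement $(\cap_{i\in I'}D_{i|Y}) \setminus \bigcup_{j\in J'} D_{j|Y}$ in $Y$ equals the corresponding locally closed stratum in $X$. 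Thus a $\mathcal D_Y$-rational curve (resp.\ $\mathcal D_Y$-algebraic $1$-torus, $\mathcal D_Y$-$\Q$-CY variety) in $Y$ is automatically a $\mathcal D$-object of the same type in $X$, and a $\mathcal D_Y$-compact variety is $\mathcal D$-compact — and this is exactly the compatibility needed for the inductive step, since when Theorems \ref{ThE}/\ref{ThD} are applied on $Y$ they produce such objects, which then transport back to $X$. I would make this identification explicit once and for all, noting that the extra index $k$ lies in $I'$ (the "kept" intersection set) precisely because $Y = D_k\cap\dots \subseteq D_k$, so no component is wrongly thrown into the "removed" set $J'$; this is the reason, flagged in the N.B.\ after Proposition \ref{PropE}, that the distinguished $D_i$'s are required to be irreducible and ample — irreducibility keeps $Y$ inside a single $D_k$ so the stratum bookkeeping is clean, and ampleness ensures $D_{i|Y}\ne 0$ so components are not lost. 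With these identifications in place the verification of each itemized condition is routine, and the lemma follows.
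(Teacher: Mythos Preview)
Your verification is exactly what the paper intends: the paper's own ``proof'' is the single phrase ``One can easily verify'' preceding the lemma, so there is nothing to compare against beyond the routine bookkeeping you supply. Your argument is correct in structure, with one arithmetic slip in the ampleness count for Theorem~\ref{ThD}(2): since $\dim X = n+1$ in the setup, the hypothesis provides $n-r+2$ (not $n-r+1$) ample $D_j$'s, and after removing at most the one index $k$ with $Y\subseteq D_k$ you retain $\ge n-r+1 = (\dim Y)-r+1$ ample restrictions on $Y$, which is the correct match; your displayed identity $n-r = (\dim Y)-r+1$ is off by one. The final paragraph identifying $\mathcal D_Y$-objects on $Y$ with $\mathcal D$-objects on $X$ goes beyond the literal statement of the lemma, but it is precisely the implicit step the paper uses immediately after each invocation of Lemma~\ref{restr} in the inductive proofs (e.g.\ in \S\ref{Pf1} where a $\mathcal D_G$-rational curve $\ell''\subset G$ is declared to be a $\mathcal D$-rational curve on $X$), so it is well placed here.
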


Kawamata's theorem, \cite[Theorem 1]{Ka},
on the length of extremal rays is key to us.

\begin{lemma}\label{length} {\rm (see \cite[Theorem 1]{Ka})}
Let $(X, \Delta)$ be a dlt pair and $g : X \to Y$ the contraction of
a $(K_X + \Delta)$-negative extremal
ray $R = \R_{ > 0} [\ell]$.
Let $E$ be an irreducible component of the $g$-exceptional locus $\Exc(g)$, i.e.,
the set of points at which $g$ is not locally an isomorphism.
Let $d := \dim E - \dim g(E)$. Then we have:
\begin{itemize}
\item[(1)]
$E$ is covered by a family of $($extremal$)$ rational curves
$\{\ell_t\}$ such that $g(\ell_t)$ is a point
$($and hence the class $[\ell_t] \in R )$.
In particular, $E$ is uniruled.
\item[(2)]
Suppose further that $d = 1$.
Then the $\ell_t$ in $(1)$ can be chosen such that
$- \ell_t . (K_X + \Delta) \le 2d = 2$.
%Here, equality holds only
%when $(X, \Delta)$ is non-klt or $g$ is a Fano contraction.
\end{itemize}
\end{lemma}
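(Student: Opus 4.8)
The plan is to reduce the statement to Kawamata's original result \cite[Theorem 1]{Ka}, which is stated in a more technical form, and explain why the dlt hypothesis suffices. First I would recall that if $(X,\Delta)$ is dlt, then in particular it is log canonical and $X$ has only rational singularities, so the cone theorem and contraction theorem for $(K_X+\Delta)$-negative extremal rays apply: the contraction $g:X\to Y$ of $R=\R_{>0}[\ell]$ exists, is projective with connected fibres, and a curve $C\subset X$ is contracted by $g$ precisely when $[C]\in R$. This gives part (1) up to invoking Kawamata's covering-family statement: on each irreducible component $E$ of $\Exc(g)$, Kawamata produces a covering family $\{\ell_t\}$ of rational curves contracted by $g$, whence $[\ell_t]\in R$ and $E$ is uniruled. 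The key input here is that $(K_X+\Delta)\cdot\ell_t<0$ forces, via bend-and-break applied to the normalization of a general member through two general points of a fibre, the existence of such rational curves with controlled $(K_X+\Delta)$-degree.

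For part (2), assuming $d=\dim E-\dim g(E)=1$, I would appeal directly to the length estimate in \cite[Theorem 1]{Ka}: Kawamata's bound gives a covering family of rational curves $\{\ell_t\}$ in $E$ with $-(K_X+\Delta)\cdot\ell_t\le 2d$. The mechanism is the standard one: take a general fibre $F$ of $g|_E$ (which is positive-dimensional since $d\ge 1$), pick two general points on a general such $F$ lying on a curve, pull back to the normalization, and run Mori's bend-and-break on a free rational curve; the anticanonical degree bound $2\dim F=2d$ comes from breaking off a rational curve through a fixed point. One must note that passing to $(K_X+\Delta)$ rather than $K_X$ only helps, since $\Delta\ge 0$ and the contracted curves may be taken general enough to meet $\Supp\Delta$ properly — or alternatively Kawamata's argument handles the boundary directly via the discrepancy bounds coming from dlt.

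The main obstacle — or rather the main subtlety to flag — is that the reader should not expect us to reprove bend-and-break here: the entire content is the citation, and what we genuinely need to check is only that the hypotheses match, namely that ``dlt'' is (more than) enough for \cite[Theorem 1]{Ka}, which is typically stated for klt or lc pairs with $X$ $\Q$-factorial or with only mild singularities. Since a dlt pair has $X$ with rational (hence $\Q$-factorial in the relevant local sense, and in any case Cohen–Macaulay) singularities and $K_X+\Delta$ is $\Q$-Cartier by definition (see \S\ref{setup1}), all ingredients of Kawamata's proof are available. So the proof reduces to: (i) invoke the contraction theorem to get $g$; (ii) invoke \cite[Theorem 1]{Ka} verbatim on each component $E$ of $\Exc(g)$ to get the covering family and, when $d=1$, the bound $-(K_X+\Delta)\cdot\ell_t\le 2d$; (iii) observe uniruledness of $E$ is immediate from being covered by rational curves. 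I expect no serious difficulty; the write-up is essentially a pointer to \cite{Ka}.
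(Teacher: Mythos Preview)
Your proposal has a genuine gap: Kawamata's theorem \cite[Theorem 1]{Ka} is stated for \emph{klt} pairs, not dlt pairs, so you cannot simply ``check that the hypotheses match.'' A dlt pair $(X,\Delta)$ with $\lfloor\Delta\rfloor\ne 0$ is not klt, and Kawamata's argument uses discrepancy bounds that genuinely rely on the klt condition. Your parenthetical remarks (rational singularities, $K_X+\Delta$ being $\Q$-Cartier) do not bridge this gap; in particular, rational singularities does not imply $\Q$-factoriality, and in any case neither of these is the relevant obstruction.

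The paper's proof handles the passage from klt to dlt by a perturbation-and-limit argument: for small $\varepsilon>0$ one replaces $\Delta$ by $\Delta'\sim_{\Q}\Delta+\varepsilon cH$ (with $H$ ample) such that $(X,\Delta')$ is klt, via \cite[Proposition 2.43]{KM}, and such that $R$ is still $(K_X+\Delta')$-negative. Kawamata then produces rational curves $\ell_t$ covering $E$ with $-\ell_t\cdot(K_X+\Delta')\le 2d$. The crucial point --- and the reason the hypothesis $d=1$ is singled out in part (2) --- is that when $d=1$ the curves $\ell_t$ must be irreducible components of the one-dimensional fibres of $g_{|E}:E\to g(E)$, so the family $\{\ell_t\}$ is \emph{independent of $\varepsilon$}. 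Letting $\varepsilon\to 0$ then gives the bound for $K_X+\Delta$ itself. Without $d=1$ the curves produced for different $\varepsilon$ could a priori vary and the limit argument would collapse; the paper notes separately that the general-$d$ statement can be extracted from other references, but that too is more than ``just a pointer.''
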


\begin{proof}
If $(X, \Delta)$ is klt, then the lemma is just \cite[Theorem 1]{Ka}.
When $(X, \Delta)$ is dlt, we have by \cite[Proposition 2.43]{KM}
that for any ample divisor $H$ on $X$,
there is a constant $c > 0$
(depending on $H$) such that, for every $0 < \varepsilon << 1$,
one can find a divisor $\Delta'$ on $X$
with $\Delta' \sim_{\Q} \Delta + \varepsilon cH$ and $(X, \Delta')$ klt.
We choose $\varepsilon$ small enough such that $\ell$ is still $(K_X + \Delta')$-negative.
Now \cite{Ka} applies and $E$ is covered by $g$-contractible rational curves $\ell_t$
with $- \ell_t . (K_X + \Delta') \le 2d$.

Suppose further that $d = 1$. Then these curves $\ell_t$
are irreducible components of the fibres of $g_{|E} : E \to g(E)$.
Hence we may assume that this family $\{\ell_t\}$ is independent of the $\varepsilon$.
Let $\varepsilon \to 0$. The lemma follows.
\end{proof}

We remark that Lemma~\ref{length}  holds without
the assumption that $d=1$. This can be extracted from
\cite[Theorem 10-3-1]{Matsuki} after
replacing the definition of dlt there by our more standard one,
which is that in \cite{KM}, and
using \cite[Theorem 1-2-5]{KaMaMa}
to drop the $\Q$-factorial assumption there.

\begin{setup}\label{Pf1}
{\rm
{\it We now prove Theorem \ref{ThF}}.
Let $(X, D + \Gamma)$ be as in Theorem \ref{ThF}.
The case $n = \dim X = 1$ is clear.
Thus we may assume that $n \ge 2$.
We proceed by induction on $n$.
By the cone theorem for $(X, D + \Gamma)$ (see \cite{Am}, \cite[Theorem 1.1]{Fu2}),
the closed cone $\NE(X)$ is generated by the closed cone $\NE(X)_{K_X + D + \Gamma \ge 0}$
and countably many
extremal rays
$R = \R_{> 0}[\ell]$ where $\ell$ is a rational curve
with $-\ell . (K_X + D + \Gamma) \in (0, 2n]$. Further, there is a contraction
$f : X \to X_1$ of an extremal ray $\R_{> 0}[\ell]$. Let
$\Exc(f) \subseteq X$ be the $f$-exceptional locus.
%i.e., the set of points at which $f$ is not locally an isomorphism.
Let $\lfloor \Gamma \rfloor$ be the integral part of $\Gamma$.

If $\ell \subseteq G$ for an irreducible component $G$ of $D + \lfloor \Gamma \rfloor$, then
$0 > \ell . (K_X + D + \Gamma) = \ell . (K_X + D + \Gamma)_{| G}$.
By the induction on $n$ and the adjunction in Lemma \ref{restr}, the class $[\ell]$ (on $G$) is
parallel to $[\ell'] + [\ell'']$ where $[\ell']$
is a $(K_X + D + \Gamma)_{| G}$-non negative effective class on $G$ and $\ell'' \subset G$ is a
$\mathcal{D}$-rational curve
with $-\ell'' . (K_X + D + \Gamma)_{| G}$ in $(0, 2(n-1)]$
(and even in $(0, 2)$ when $\ell''$ is not $\mathcal{D}$-{\it compact}).
Since $\R_{> 0}[\ell]$ is extremal, $[\ell]$ on $X$ is parallel to (the image on $X$ of) $[\ell'']$.
We are done.

Therefore, we may add the extra assumption that
$\ell \not\subseteq (D + \lfloor \Gamma \rfloor)$ for any $[\ell] \in R$,
and hence $E_i \not\subseteq (D + \lfloor \Gamma \rfloor)$
for every irreducible component $E_i$ of $\Exc(f)$.
%this always holds if $f$ is a Fano contraction.
Since $\ell \not\subseteq D$,
$0 > \ell . (K_X + D + \Gamma) \ge \ell . (K_X + \Gamma)$.
Hence $R$ is also a $(K_X + \Gamma)$-negative extremal ray, and $(X, \Gamma)$ is dlt;
see \cite[Corollary 2.39]{KM}
(to be used later).

Since $D$ is Cartier, $D_{| E_1}$ is an effective Cartier divisor on $E_1$.
If $D = 0$ or if $D \cap E_1 = \emptyset$, then
the $\ell_t$ in Lemma \ref{length} is a $(K_X + D + \Gamma)$-negative
$\mathcal{D}$-compact rational curve.
If $f_{| E_1 \cap D}$ contracts a curve,
also denoted by $\ell$, to a point on $X_1$, then $[\ell] \in R$ and
$\ell \subseteq D$,
contradicting to our extra assumption.
Thus, $f_{| E_1 \cap D}$ is a finite morphism. Hence
$\dim f(E_1) \ge \dim f(E_1 \cap D) = \dim E_1 \cap D = \dim E_1 - 1$,
so $\dim f(E_1) = \dim E_1 - 1$.

By Lemma \ref{length},
$E_1$ is covered by $f$-contractible (extremal) rational curves $\ell$ such that
$-\ell . (K_X + \Gamma) \le 2$.
Now $0 > \ell . (K_X + D + \Gamma) \ge  -2 + \ell . D$.
Hence, if $\nu : \widetilde{\ell} \to \ell$ is the
normalization, then $2 > \ell . D = \nu^*(D_{| \ell})$. So, $D$ being Cartier and integral,
the normalization of $\ell \setminus D$
contains $\C$.
Thus $\ell$ is a
$\mathcal{D}$-rational curve.
Further, $$0 < -\ell . (K_X + D + \Gamma) \le 2 - \ell . D \le 2$$
(the latter being $< 2$ when $\ell$ is not $\mathcal{D}$-compact).
This proves Theorem \ref{ThF}.
}
\end{setup}

\begin{setup}\label{Pf2}
{\rm
{\it We now prove Theorems \ref{ThE} and \ref{ThD}}.
We proceed by induction on $n = \dim X$. The case $n = 1$ is clear.
Thus we may assume that $n \ge 2$ and $D = \sum_i D_i$ is the decomposition into
Cartier integral divisors.
By the argument in \S\ref{Pf1}, we may assume that $K_X + D + \Gamma$ is nef
in both Theorems \ref{ThE} and \ref{ThD}.

\begin{lemma}\label{amp}
Let the assumptions be as in Theorems $\ref{ThE}$ and $\ref{ThD}$,
assume the validity of these theorems
in dimension $n-1$ and of Theorem $\ref{ThE}$ in dimension $3$.
Suppose that $D_1$ is ample $($true if $n > r$ or in the case of Theorem $\ref{ThD} (1) )$.
Then $K_X + D + \Gamma$ is ample.
\end{lemma}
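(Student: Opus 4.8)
The plan is to prove Lemma \ref{amp} by running the log minimal model program for the pair $(X, D+\Gamma)$ and using adjunction to $D_1$ together with the inductive hypothesis. We already know from \S\ref{nef} that $K_X+D+\Gamma$ is nef, so it suffices to rule out the possibility that it fails to be ample, i.e. that it is nef but not big or that it has degree zero on some curve. By the base-point-free theorem (applicable since $(X,D+\Gamma)$ is dlt and, after the usual $\varepsilon$-perturbation as in the proof of Lemma \ref{length}, klt), some multiple of $K_X+D+\Gamma$ defines a morphism $\phi: X \to Z$ contracting exactly the curves on which $K_X+D+\Gamma$ is zero. If $\phi$ is finite (hence birational onto its image with the divisor pulled back), then $K_X+D+\Gamma$ is ample and we are done; so assume $\phi$ contracts a curve $C$. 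The goal is to produce from $C$ either a $\mathcal D$-rational curve or $\mathcal D$-algebraic $1$-torus of non-positive degree, or a $\mathcal D$-$\Q$-CY variety $T$ with $(K_X+D+\Gamma)|_T \sim_\Q 0$ — which contradicts the hypotheses of the ambient Theorems \ref{ThE}, \ref{ThD} that we are allowed to assume fail — thereby forcing $K_X+D+\Gamma$ to be ample.

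First I would restrict to $D_1$. By Lemma \ref{dlt}, $(K_X+D+\Gamma)|_{D_1} = K_{D_1} + D_{D_1} + \Gamma_{D_1}$ for a dlt pair $(D_1, D_{D_1}+\Gamma_{D_1})$ of dimension $n-1$, and by Lemma \ref{restr} this restricted pair again satisfies the hypotheses of the relevant theorem in dimension $n-1$ (here the collection $\mathcal D|_{D_1}$ is the induced Cartier decomposition of $D - D_1$ restricted to $D_1$, which retains enough ample components because $D_1$ was ample — this is exactly the point of the \textbf{N.B.} remark). Since $D_1$ is ample, any contracted curve $C$ meets $D_1$, or a deformation of it does; more usefully, ampleness of $D_1$ forces $(K_X+D+\Gamma)|_{D_1}$ to detect the failure of ampleness: if $K_X+D+\Gamma$ were nef but not ample, its restriction to the ample divisor $D_1$ cannot be ample either (a divisor ample on an ample effective divisor, together with nefness, would be ample on $X$ by a standard argument à la Kleiman/Nakai). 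Applying the inductive hypothesis to $(D_1, D_{D_1}+\Gamma_{D_1})$ then yields on $D_1$ one of the three excluded configurations: a $\mathcal D$-rational curve, a $\mathcal D$-algebraic $1$-torus, or a $\mathcal D$-$\Q$-CY variety $T \subseteq D_1$ with $(K_X+D+\Gamma)|_T \sim_\Q 0$.

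Next I would check that each such object found inside $D_1$ is still of the required type \emph{as a subvariety of $X$} — i.e. that $\mathcal D$-compactness and the $\mathcal D$-rational/torus/CY conditions relative to the decomposition on $D_1$ lift to the same conditions relative to $\mathcal D$ on $X$. This is essentially bookkeeping with the stratification: a stratum of $D|_{D_1}$ is a stratum of $D$ contained in $D_1$, and the complement being disjoint from the relevant other components is preserved. One must be slightly careful that a $\mathcal D$-rational curve on $D_1$ might a priori have lower degree against $K_X+D+\Gamma$ than the adjunction formula on $D_1$ predicts because of the extra term $\Delta \ge 0$; but since $\Delta$ is effective, $(K_X+D+\Gamma)|_C = (K_{D_1}+D_{D_1}+\Gamma_{D_1})|_C$ and the sign is what it is, so a non-positive-degree curve on $D_1$ is a non-positive-degree curve on $X$, and a $\Q$-trivial restriction stays $\Q$-trivial. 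Invoking the auxiliary case "Theorem \ref{ThE} in dimension $3$" (part of the hypotheses of the lemma) handles the low-dimensional base of this induction when $n-1 = 3$ or when the reduction lands there.

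The main obstacle I expect is the step asserting that nefness of $K_X+D+\Gamma$ together with ampleness of its restriction to the ample divisor $D_1$ forces ampleness on all of $X$ — equivalently, that failure of ampleness on $X$ must already be visible on $D_1$. For a genuinely Cartier ample $D_1$ this is standard (if $(K_X+D+\Gamma)|_{D_1}$ is ample and $K_X+D+\Gamma$ is nef, then for $m\gg0$ the divisor $m(K_X+D+\Gamma) + D_1$ is ample by Nakai–Moishezon checked via the exact sequence $0 \to \OO_X(-D_1) \to \OO_X \to \OO_{D_1} \to 0$ and an easy dimension count, after which one removes the $D_1$ using nefness); but one must confirm this works on the singular $X$, where $D_1$ is Cartier by hypothesis and $X$ is projective, so the cohomological argument goes through. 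The other delicate point, already flagged, is ensuring $D_1$ being \emph{irreducible} and ample is genuinely used so that the induced decomposition on $D_1$ does not degenerate (some restricted component becoming zero), but this is exactly why the hypotheses are stated as they are, and the verification is immediate from the transversality in Lemma \ref{dlt}.
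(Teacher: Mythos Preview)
Your argument has a genuine gap at the central step: the claim that if $L := K_X+D+\Gamma$ is nef and its restriction $L|_{D_1}$ to an ample Cartier divisor $D_1$ is ample, then $L$ itself is ample, is false. A simple counterexample already appears for log smooth surface pairs satisfying all the hypotheses of the lemma. Take $X=\PP^1\times\PP^1$, let $D_1$ be a smooth curve of bidegree $(1,1)$ (ample, $D_1\cong\PP^1$) and $D_2$ a smooth curve of bidegree $(2,1)$ meeting $D_1$ transversally, and set $D=D_1+D_2$, $\Gamma=0$. Then $K_X+D$ has bidegree $(1,0)$: it is nef but not ample, while $(K_X+D)|_{D_1}$ has degree $1$ and is ample on $D_1$. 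By adjunction $(K_X+D)|_{D_1}=K_{D_1}+D_2|_{D_1}$ with $D_2|_{D_1}$ three points, so the induced pair on $D_1$ has ample log canonical divisor and exhibits none of the excluded configurations; your inductive step would therefore (incorrectly) conclude that $K_X+D$ is ample. The justification you sketch, namely that $m L+D_1$ is ample and that one can then ``remove the $D_1$ using nefness'', does not work: adding a nef divisor preserves ampleness, but subtracting an ample one does not. Nor does assuming $L$ big help (take $X$ the blowup of $\PP^2$ at a point and $L$ the pullback of a line).

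The paper's proof uses the ampleness of $D_1$ in a different way. It does not restrict to $D_1$; instead it absorbs a small multiple of $D_1$ into an ample perturbation $\Delta_\varepsilon$, so that any $(K_X+D+\Gamma)$-trivial class $[\ell]\in\NE(X)$ becomes strictly negative against $K_X+D_\varepsilon+\Gamma$. The cone theorem then produces a genuine extremal rational curve $\ell''$ with $\ell''\cdot(K_X+D+\Gamma)=0$, and one argues as in \S\ref{nef}: either $\ell''$ lies in some component $G$ of $D+\lfloor\Gamma\rfloor$ (where the inductive hypothesis applies, since $(K_X+D+\Gamma)|_G$ would fail to be ample along $\ell''$), or $\ell''\not\subseteq D+\lfloor\Gamma\rfloor$, in which case the fibre-dimension count plus Kawamata's length bound (Lemma~\ref{length}) forces $\ell''\cdot D\le 2$, exhibiting $\ell''$ as a $\mathcal D$-rational curve or $\mathcal D$-algebraic $1$-torus of degree zero. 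In the surface example above this is exactly what happens: the general fibre of the first projection is a $\mathcal D$-algebraic $1$-torus with $(K_X+D)$-degree zero, an excluded configuration that your restriction-to-$D_1$ argument cannot see because that fibre is not contained in $D_1$.
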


\begin{proof}
Suppose on the contrary that $K_X + D + \Gamma$ is not ample.
Then, by Kleiman's ampleness criterion, there is a nonzero class
$[\ell]$ in the closure $\NE(X)$ of effective $1$-cycles on $X$
(with $\R$-coefficients) such that $\ell . (K_X + D + \Gamma) = 0$.
Since $D_1 \subseteq D$ is ample,
write $D = D_{\varepsilon} + \Delta_{\varepsilon}$
with an ample $\Q$-Cartier divisor $\Delta_{\varepsilon} = \varepsilon_1 D_1 + \varepsilon_2 D$
for some $\varepsilon_i \in (0, 1)$. Now $(X, D_{\varepsilon} + \Gamma)$ is
still dlt (see \cite[Corollary 2.39]{KM}).
Note that
$$0 = \ell . (K_X + D + \Gamma) =
\ell . (K_X + D_{\varepsilon} + \Gamma) + \ell . \Delta_{\varepsilon} >
\ell . (K_X + D_{\varepsilon} + \Gamma) .$$
By the cone theorem in \cite{Am} or \cite[Theorem 1.1]{Fu2}, $\ell$ is parallel to $\ell' + \ell''$
for some class $[\ell']$ in $\NE(X)$ and a $(K_X + D_{\varepsilon} + \Gamma)$-negative
extremal rational curve $\ell''$.
Note that the nef divisor $K_X + D + \Gamma$ is perpendicular to $\ell$ and hence
$$0 = \ell'' . (K_X + D + \Gamma) = \ell' . (K_X + D + \Gamma).$$

Let $g : X \to X_2$ be the contraction of
the $(K_X + D_{\varepsilon} + \Gamma)$-negative extremal ray $\R_{> 0}[\ell'']$.
If $\ell''$ lies in
an irreducible component $G$ of $D + \lfloor \Gamma \rfloor$, then
$0 = \ell'' . (K_X + D + \Gamma) = \ell . (K_X + D + \Gamma)_{| G}$,
which contradicts the ampleness result in lower dimension by the inductive assumption
as $\dim G < \dim X$
(see Lemma \ref{restr}
and note that, when $n=4$, we reduce to Theorem \ref{ThE}(2)
for all cases of Theorems \ref{ThE} and \ref{ThD}).
Thus we may assume that
$\ell'' \not\subseteq (D + \lfloor \Gamma \rfloor)$.
So $\ell'' \cap (D + \lfloor \Gamma \rfloor)$ is a finite set and
no irreducible component $E_1$ of
$\Exc(g)$ is contained in $D + \lfloor \Gamma \rfloor$.
Since we may assume to reach a contradiction that $X$ has no $(K_X + D + \Gamma)$-non-positive $\mathcal{D}$-rational curves,
$\ell'' \cap D$ is a non-empty finite set.

Thus $0 = \ell'' . (K_X + D + \Gamma) > \ell'' . (K_X + \Gamma)$.
Hence $\R_{> 0}[\ell'']$ is also a $(K_X + \Gamma)$-negative extremal ray.
By the argument in \S \ref{Pf1} and noting that $\ell'' \not\subseteq D$,
$$\dim g(E_1) \ge \dim g(E_1 \cap D) = \dim E_1 \cap D = \dim E_1 - 1 .$$
By Lemma \ref{length},
$E_1$ is covered by rational curves $\ell''$ with
$-\ell'' . (K_X + \Gamma) \le 2$.
Moreover $0 = \ell'' . (K_X + D + \Gamma) \ge  -2 + \ell'' . D$.
Thus $\ell'' . D \le 2$, and
$\ell''$ is a $(K_X + D + \Gamma)$-non-positive
$\mathcal{D}$-rational curve or $\mathcal{D}$-algebraic $1$-torus as in \S \ref{Pf1}, a contradiction!
\end{proof}

By Lemma \ref{amp} and the assumption of Theorems \ref{ThE} and \ref{ThD},
we may assume that $n = r$, $D \ne 0$ and
$D$ is Cartier. This also includes the case of Theorem \ref{ThE} (2) where $n = 3$.
(The case $n = 2$ with $D = 0$ is taken care of
via Remark \ref{ABH}
by the same argument given below).
%or by referring to Proposition \ref{PropD}.)
Since we have proved the nefness of $K_X + D + \Gamma$,
by the abundance assumption for dlt pairs of dimension $n = r$,
%(see \cite{KeMaMc}),
there is a fibration
$h : X \to Z$ with connected general fibre $F$ such that
$$K_X + D + \Gamma = h^*H \,\,\,\mbox{and}\,\,\,
(K_X + D + \Gamma)_{| F} = h^*H _{| F} \sim_{\Q} 0 $$
for some ample $\Q$-divisor $H$ on $Z$.
We divide into three cases:
Case (I) $\dim Z = 0$, Case(II) $0 < \dim Z < \dim X$ and
Case (III) $\dim Z = \dim X$.

\par \vskip 1pc
Case(I) $\dim Z = 0$. Then $K_X + D + \Gamma \sim_{\Q} 0$.
Thus for an irreducible component $G$ of $D$, we have
$(K_X + D + \Gamma)_{| G} \sim_{\Q} 0$.
{\it
There is such a $G$ if
$D$ has at least $n-r+1 \ge 1$ Cartier components as in
Theorem $\ref{ThE}$ or $\ref{ThD}$.}

Applying the adjunction as in Lemma \ref{restr} and replacing $X$ by
$G$ (of dimension $n - 1 = r - 1$)
and further by an irreducible component of some $D_1 \cap D_2 \cap \dots$,
we may assume that either $\dim X \le 1$
which is clear; or $D = 0$, $(X, \Gamma)$ is dlt and $K_X + \Gamma \sim_{\Q} 0$.

Consider the latter case.
If $\Gamma \ne 0$, then $X$ is uniruled and every rational curve $C$ on $X$
is a $\mathcal{D}$-compact rational curve with
$(K_X + D + \Gamma)_{| C} \sim_{\Q} 0$, which is the excluded case.

If $\Gamma = 0$, then $K_X \sim_{\Q} 0$ and hence
$X$ is $\mathcal{D}$-compact variety which is $\Q$-CY.

{\bf N.B.}
If the initial pair $(X, D + \lfloor \Gamma \rfloor)$ is log smooth,
then $X$ (and subsequent namesakes of the pairs) are (log) smooth. We now have
$K_X \sim_{\Q} 0$. By the Beauville-Bogomolov decomposition,
either $X$ is a $\Q$-torus, or it has a Calabi-Yau manifold as its finite \'etale cover.
Thus CY($m$) (with $m< r = n$) implies, in the latter case,
the existence of a
$\mathcal{D}$-compact rational curve $C$ on the initial $X$ with
$(K_X + D + \Gamma)_{| C} \sim_{\Q} 0$.

\par \vskip 1pc
Case(II) $0 < \dim Z < \dim X$.
Then
$K_F + (D + \Gamma)_{| F} = (K_X + D + \Gamma)_{| F} = h^*H _{| F} \sim_{\Q} 0$
as $F$ is a fibre of $h$.
So Theorems \ref{ThE} and \ref{ThD}
follow by the same arguments
as for Case~(I).

\par \vskip 1pc
Case(III) $\dim Z = \dim X$. Then $K_X + D + \Gamma$ is nef and big.
We may assume that $h : X \to Z$ is birational but not isomorphic,
$\emptyset \neq \Exc(h) \subset X$ the exceptional locus of $h$, and
$\ell \subseteq \Exc(h)$ a curve contracted by $h$.
Then $(K_X + D + \Gamma)_{| \ell} = h^*H_{| \ell} \sim_{\Q} 0$.

If $\ell \subseteq G$ for an irreducible component $G$ of $D + \lfloor \Gamma \rfloor$,
then $(K_X + D + \Gamma)_{|G}$ is not ample and we
use the abundance assumption on $G$ and divide into Cases (I) - (III) again
to conclude Theorems \ref{ThE} and \ref{ThD} by the induction on dimension.

Thus we may assume that $\ell \not\subseteq (D + \lfloor \Gamma \rfloor)$.
Hence $\ell \cap (D + \lfloor \Gamma \rfloor)$ is a finite set and $\ell . D \ge 0$.
So, no irreducible component of
$\Exc(h)$ is contained in $D + \lfloor \Gamma \rfloor$.

Consider first the case that $\ell . (K_X + \Gamma) < 0$
for some $h$-contractible curve
$\ell$. Then $\ell$ is parallel to $\ell' + \ell''$
for some class $[\ell'] \in \NE(X)$ and
a $(K_X + \Gamma)$-negative extremal rational curve $\ell''$
by the cone theorem (see \cite[Theorem 1.1]{Fu2}).
Note that the nef divisor $K_X + D + \Gamma$ is perpendicular to $\ell$ and hence\\[-5mm]
$$0 = \ell'' . (K_X + D + \Gamma) = \ell' . (K_X + D + \Gamma) .$$
Thus $\ell'' \not\subseteq (D + \lfloor \Gamma \rfloor)$ by the same argument
as above for $\ell$ (for later use).

Let  $h_1 : X \to Z_1$
be the contraction of the extremal ray $\R_{> 0}[\ell'']$.
Since $\ell'' . h^*H = 0$ with $H$ ample, every irreducible component
$E_1$ of the exceptional locus of $h_1$ is a subset of $\Exc(h)$
and hence is not contained in $D + \lfloor \Gamma \rfloor$.
As in the proofs of Lemma \ref{amp} and \S \ref{Pf1} and by Lemma \ref{length},
$E_1$ is covered by rational curves $\ell''$ with
$-\ell'' . (K_X + \Gamma) \le 2$.
Now $0 =$ $\ell'' . (K_X + D + \Gamma) \ge  -2 + \ell'' . D$.
So $\ell'' . D \le 2$ and Theorems \ref{ThE} and \ref{ThD} hold
as in \S \ref{Pf1}.

Consider now the case that $\ell . (K_X + \Gamma) \ge 0$ for every curve $\ell$ on $X$ contracted by $h$.
Then $0 = \ell . (K_X + D + \Gamma) \ge \ell . D \ge 0$. So $\ell . (K_X + \Gamma) = \ell . D = 0$ and
$\ell \cap D = \emptyset$. Thus $\Exc(h) \cap D = \emptyset$.
By the construction of the birational morphism $h : X \to Z$,
$K_X + D + \Gamma = h^*(K_Z + h_*(D + \Gamma))$. So when $h$ is a small contraction,
a `good log resolution' for the dlt pair $(X, D + \Gamma)$ as in \cite[Theorem 2.44(2)]{KM}
is also a `good log resolution'
for $(Z, h_*(D + \Gamma))$ and hence the latter is also dlt.
Thus
by \cite[Corollary 1.5]{HM}, every fibre of $h : X \to Z$ is rationally chain connected.
Hence the above $\ell$ can be chosen to be a rational curve away from $D$,
and is a $\mathcal{D}$-compact rational curve with
$(K_X + D + \Gamma)_{| \ell} = h^*H_{| \ell} \sim_{\Q} 0$, which is the excluded case.

Therefore, we may assume that $h$ contracts an irreducible divisor $S \subseteq \Exc(h)$.
By Lemma \ref{divcontr} below, a general fibre $F_S$ of $h_{|S} : S \to h(S)$ is uniruled.
Since $S \cap D \subseteq \Exc(h) \cap D = \emptyset$, every
rational curve ($F \supseteq$) $\ell \subseteq F_S \subseteq S$
is a $\mathcal{D}$-compact rational curve
with $(K_X + D + \Gamma)_{|\ell} \sim_{\Q} 0$,
which is the excluded case.
This proves Theorems \ref{ThE} and \ref{ThD},
modulo Lemma \ref{divcontr} below applied to $K_V + B = K_X + D + \Gamma$.
}
\end{setup}

\begin{lemma}\label{divcontr}
Let $V$ be a normal projective variety and $B$ an effective Weil $\Q$-divisor
with coefficients $\le 1$ such that $K_V + B$ is $\Q$-Cartier.
Let $\varphi : V \to W$ be a birational morphism and $S\subset V$ an irreducible divisor.
Suppose that $\varphi$ contracts $S$,
$S \not\subseteq \lfloor B \rfloor$
and $C . (K_V + B) \le 0$ for a general curve $C$ on $S$ contracted by $\varphi$.
Then a general fibre $F_S$ of $\varphi_{|S} : S \to \varphi(S)$ is a uniruled variety.
\end{lemma}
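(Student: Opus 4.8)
The plan is to reduce to a standard result on exceptional divisors of birational contractions combined with a discrepancy/adjunction argument on the contracted divisor $S$. First I would pass to a log resolution to replace $V$ by a smooth model: choose $\psi : V' \to V$ a log resolution of $(V, B)$ and let $S'$ be the strict transform of $S$, so $\psi_{|S'} : S' \to S$ is birational and $\varphi \circ \psi$ contracts $S'$ with general fibre dominating $F_S$; it suffices to prove $S'$ (equivalently its general fibre over $\varphi(S)$) is uniruled, since uniruledness is a birational invariant and is inherited by the image of a dominant rational map. On $V'$ write $K_{V'} + B' = \psi^*(K_V + B) + E$ where $E$ is $\psi$-exceptional (with the usual convention that we move negative discrepancies into $B'$ and positive ones into $E$), and $B'$ has coefficients $\le 1$ because $B$ does and the new exceptional coefficients coming from a dlt-type (in fact arbitrary, after truncation) situation are $< 1$; the key point I need is that the coefficient of $S'$ in $B'$ is strictly less than $1$, which follows from $S \not\subseteq \lfloor B \rfloor$ together with the fact that $S$ is not $\psi$-exceptional, so its coefficient in $B'$ equals its coefficient $b_S < 1$ in $B$.

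Next I would run the numerical computation on a general contracted curve. Let $C$ be a general curve on $S$ contracted by $\varphi$, and $C'$ its strict transform on $S'$ (general in its family), contracted by $\varphi \circ \psi$. By the projection formula $C' . \psi^*(K_V + B) = C . (K_V + B) \le 0$, and since $C'$ is general on $S'$ we may assume $C'$ moves in a family sweeping out $S'$ and meets the $\psi$-exceptional divisors, other than $S'$ itself, nonnegatively — more precisely, since $C'$ is general it is not contained in $\Supp(E) \cup \Supp(B' - b_S S')$ if these do not contain $S'$, so $C' . (E - (\text{part of } E \text{ supported on } S')) \ge 0$ and $C' . (B' - b_S S')|_{S'} \ge 0$. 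Combining, $C' . (K_{V'} + b_S S')|_{S'} \le C' . (K_{V'} + B')|_{S'} - (\text{nonneg}) = C'.\psi^*(K_V+B) + C'.E - (\text{nonneg})$; after absorbing the exceptional contributions this yields $C' . (K_{S'} + (b_S S' + \dots)|_{S'} \cdot \text{stuff}) \le 0$ via adjunction $K_{S'} = (K_{V'} + S')|_{S'}$, hence $C' . K_{S'} \le (1 - b_S) (C' . S'|_{S'}) + (\text{nonneg exceptional terms restricted})$. The honest way to organize this is: $C'$ general and contracted means $C' . K_{S'} \le -(1-b_S) \, C'.S' \le 0$ if $C'.S' \ge 0$, i.e. $K_{S'}$ is negative on the general fibre of $S' \to \varphi(S')$; but $C' . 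S'$ may be negative (self-intersection of the contracted divisor), in which case I instead use that $S'$ being $(\varphi\psi)$-exceptional forces, by the negativity lemma \cite[Lemma 3.39]{KM}, that $S'|_{S'}$ is not $\varphi\psi_{|S'}$-nef, so on a suitable general contracted curve $C'.S' < 0$ and then $C'.K_{S'} \le C'.\psi^*(K_V+B)|_{S'} - b_S C'.S' + (\text{exc}) $ is even more negative — either way $K_{S'}$ restricted to the general fibre $F_{S'}$ has negative degree on a covering family of curves.

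Finally, I would invoke the uniruledness criterion: a smooth (or normal) projective variety $F$ admitting a covering family of curves on which $K_F$ has negative degree is uniruled — this is the Miyaoka–Mori theorem \cite[Theorem IV.1.9]{Ko-book} (bend-and-break), which is exactly the tool used elsewhere in this paper via Lemma \ref{length}; applied to $F = F_{S'}$ (a general fibre of $S' \to \varphi(S')$) this gives that $F_{S'}$ is uniruled, hence so is $F_S$ as its image under a dominant morphism, hence so is the general fibre of $\varphi_{|S}$. The main obstacle I anticipate is the bookkeeping of discrepancies in the step above — making sure the coefficient of $S'$ in $B'$ is $< 1$ and that the exceptional correction terms $C'.E$ and $C'.(B'-B)$ have the right sign on a \emph{general} contracted curve $C'$ — rather than the final invocation of bend-and-break, which is standard. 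One clean way to sidestep part of the bookkeeping is to first replace $B$ by $\lceil B \rceil$-type truncation or to take $\psi$ to be a dlt modification so that $(V', B')$ is dlt and the negativity lemma applies cleanly to the contraction $\varphi \circ \psi$; I would adopt whichever makes the sign of $C' . S'$ and of the exceptional terms transparent.
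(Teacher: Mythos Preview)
Your overall plan---pass to a good birational model, compare $K_{S'}$ to the pullback of $K_V+B$ via adjunction, use the negativity lemma to get $C'.S'<0$, and finish with Miyaoka--Mori/BDPP---is exactly the paper's strategy. The gap is in the bookkeeping you yourself flag: with a log resolution and the convention $K_{V'}+B'=\psi^*(K_V+B)+E$ ($E\ge 0$ exceptional), the correction $C'.E\ge 0$ sits on the \emph{wrong} side of the inequality. Concretely, adjunction on the smooth $V'$ gives
\[
C'.K_{S'}=C'.\psi^*(K_V+B)+C'.E+(1-b_S)\,C'.S'-C'.(B'-b_S S'),
\]
and while the first summand is $\le 0$ and the third is $<0$, nothing controls $C'.E\ge 0$; it can overwhelm $(1-b_S)C'.S'$. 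So ``$C'.K_{S'}<0$'' does not follow from a log resolution, and the phrase ``after absorbing the exceptional contributions'' hides a genuine obstruction rather than mere bookkeeping. (The side claim that $B'$ has coefficients $\le 1$ is also false without an lc hypothesis on $(V,B)$, though that is not the essential point.)

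The fix is precisely your closing suggestion, and it is what the paper does: replace the log resolution by a dlt modification $\sigma_1:V'\to V$. The defining feature of a dlt blowup is that $K_{V'}+B'$ is $\sigma_1$-nef; the negativity lemma then forces $K_{V'}+B'+E'=\sigma_1^*(K_V+B)$ with $E'\ge 0$ on the \emph{opposite} side, so that $C'.E'\ge 0$ now helps rather than hurts. The paper actually needs \emph{two} dlt blowups: after the first, $S'$ carries coefficient $a<1$ in $B'$, so $(V',\widetilde{B'}+S')$ need not be dlt and one cannot invoke the adjunction formula of Lemma~\ref{dlt}; a second dlt blowup $\sigma_2:V''\to V'$ of this new pair makes $(V'',\widetilde{B''}+S'')$ dlt, whence $(K_{V''}+\widetilde{B''}+S'')_{|S''}=K_{S''}+\Delta$ with $(S'',\Delta)$ dlt. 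Chaining the two pullback inequalities and using $C'.S'<0$ yields $C''.(K_{S''}+\Delta)<0$, and restriction to a general fibre plus \cite{BDPP} gives the uniruledness.
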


\begin{proof}
Let $\sigma_1 : V' \to V$ be a dlt blowup of the pair $(V, B)$
(see \cite[Theorem 10.4]{Fu2}) with
$E_{\sigma_1}$  the reduced exceptional divisor and
$B'$ the sum of $E_{\sigma_1}$ and the proper transform
$\sigma'B$ of $B$. Then the pair $(V', B')$ is $\Q$-factorial
and dlt and we have
$$K_{V'} + B' + E' = \sigma_1^*(K_V + B).$$
for a $\sigma_1$-exceptional effective divisor $E'$.
Let $S' := \sigma_1' S$ be the proper transform of $S$, which is now $\Q$-Cartier.
Write $B' := a S'  + \widetilde{B'}$ for some $a \in [0, 1)$ such that $S'$
is not a component of $\widetilde{B'}$.
Let $\sigma_2 : V'' \to V'$ be a dlt blowup of the pair $(V', \widetilde{B'} + S')$.
Thus, if $E_{\sigma_2}$ is the
reduced exceptional divisor and $\widetilde{B''}$ the sum
of $E_{\sigma_2}$ and $\sigma_2' \widetilde{B'}$,
then there is a $\sigma_2$-exceptional divisor $E'' \ge 0$ such that
the pair $(V'', \widetilde{B''} + S'')$ is $\Q$-factorial dlt
with $S'' := \sigma_2' S'$
and such that
$$K_{V''} + \widetilde{B''} + S'' + E'' =
\sigma_2^*(K_{V'} + \widetilde{B'} + S').$$
Let
$C' \subset S'$ and $C'' \subset S''$
be the strict transforms of the general curve $C \subset S$ (contracted by $\varphi$).
By the adjunction formula given in Lemma \ref{dlt},
we have a $\Q$-Cartier adjoint divisor $(K_{V''} + \widetilde{B''} + S'')_{| S''} = K_{S''} + \Delta$
on the normal variety $S''$ for some $\Delta \ge 0$
and that the pair $(S'', \Delta)$ is dlt.
Thus
$$\begin{aligned}
&C'' . (K_{S''} + \Delta) \le C'' . (K_{V''} + \widetilde{B''} + S'' + E'')_{| S''} = \\
& C'' . \sigma_2^*(K_{V'} + \widetilde{B'} + S')_{| S''}  =
C' . (\sigma_1^*(K_V + B) - E') + (1-a) C' . S' \\
&\le C . (K_V + B) + (1-a) C' . S' \le (1-a) C' . S' .
\end{aligned}$$
Since the composition $V' \to V \to W$ is birational and contracts $S'$,
by the well-known negativity lemma (see e.g. \cite[Lemma 3.6.2]{BCHM}),
one can choose general curves $C'$ such that $C' . S' < 0$
and that a general fibre $F_{S'}$ of the fibration
$\varphi_{S'}$ on $S'$
(resp.~$F_{S''}$ of $\varphi_{S''}$ on $S''$)
induced by $\varphi_{|S}$ is covered by such curves $C'$ (resp.~$C''$).
Thus $C'' . (K_{S''} + \Delta) < 0$.

Since $(S'', \Delta)$ is dlt, for a general fibre $F_{S''}$ of the fibration $\varphi_{S''}$,
we have $(K_{S''} + \Delta)_{|F_{S''}} = K_{F_{S''}} + \Theta$ with
$(F_{S''}, \Theta)$ dlt. Now $C'' . (K_{F_{S''}} + \Theta) < 0$ with $C'' \subseteq F_{S''}$
covering $F_{S''}$.
Thus $F_{S''}$ (and hence $F_S$) are uniruled.
Indeed, let $\tau : \widetilde{F} \to F_{S''}$ be a resolution
and write
$$K_{\widetilde F} + \tau' \Theta + E' = \tau^*(K_{F_{S''}} + \Theta) + E''$$
with $\tau$-exceptional effective divisors $E'$, $E''$.
Since $C'' . (K_{F_{S''}} + \Theta) < 0$ for curves $C''$ covering $F_{S''}$,
$K_{F_{S''}} + \Theta$ on $F_{S''}$ and
$\tau^*(K_{F_{S''}} + \Theta)$ on $\widetilde{F}$
are not pseudo-effective (see \cite[Theorem 0.2]{BDPP}).
Thus $K_{\widetilde F}$ is not pseudo-effective,
for otherwise the left hand side of
the displayed equation above
would be pseudo-effective, and pushing forward the right side
would give the pseudo-effectivity of $K_{F_{S''}} + \Theta$, a contradiction.
Therefore, $\widetilde{F}$ (and hence its image $F_S$ on $S$)
are uniruled by the well-known uniruledness criterion of Miyaoka-Mori.

This completes the proof of Lemma \ref{divcontr} and also
of Theorems \ref{ThE} and \ref{ThD}.
\end{proof}

\begin{setup}\label{PE}
{\rm
{\it Now we prove Theorem \ref{ThH}.}
Set $n := \dim X$.
Since $(X, D)$ is dlt, \cite[Proposition 2.43]{KM} implies
that $(X, D_t)$ is klt for some $D_t \sim_{\Q} D + t H$
with $H$ an ample divisor and $t \to 0^+$.
Since $K_X + D$ is not pseudo-effective,
the $H$-directed LMMP in \cite[Proof of Corollary 1.3.3]{BCHM}
(the $X$ there and hence here are assumed to be $\Q$-factorial)
implies the existence of a composition
$$X = X_0 \overset{f_0}\dasharrow \cdots \overset{f_{m-1}}\dasharrow X_m =: W$$
of divisorial contractions and flips of $(K_{X_i} + D(i))$-negative extremal rays
with $D(i)$ the push-forward of $D$
and the existence of a Fano contraction
$\gamma: W \to Y$ of a $(K_W + B)$-negative extremal ray
$\R_{> 0}[\ell]$ with $F$ a general fibre, $B := D(m)$.
Note that $(X_i, D(i))$ is still $\Q$-factorial and dlt.
Also $-(K_W + B)$ is relatively ample over $Y$ and
$-(K_W + B)_{|F} = -(K_F + B_{|F})$ is ample
by the definition of Fano contraction.
The divisors contracted by the birational map
$f = f_{m-1} \circ \cdots \circ f_0 : X \dasharrow W$
being uniruled (\cite[Corollary 5-1-4]{KaMaMa}),
no component of $D$ is contracted by $f$ by assumption.
Let $B = \sum B_i$ be the irreducible decomposition.
It follows that no $B_i$ is uniruled and $B$ has the same number
of irreducible components as $D$.
By Lemma \ref{dlt}, each $B_i$ is normal and
for some $\Delta_i \ge 0$, %we have the following
$$(K_W + B)_{|B_i} = K_{B_i} + (B - B_i)_{|B_i} + \Delta_i .$$

Suppose that $\gamma_{|B_1} : B_1 \to \gamma(B_1)$ is not generically finite
so that its general fibre can be taken as the extremal curve $\ell$.
Since $-(K_W + B)$ is relative ample over $Y$,
we have
$$0 > \ell . (K_W + B) = \ell . (K_W + B)_{|B_1} \ge \ell . (K_{B_1} + \Delta_1) .$$
So $K_{B_1} + \Delta_1$ is not pseudo-effective.
Hence $B_1$ is uniruled, contradicting the assumption.
Thus, we may assume that $\gamma_{|B_i} : B_i \to \gamma(B_i)$ is generically finite
for every $i$. In particular,
$n-1 \ge \dim Y \ge \dim \gamma(B_i) = \dim B_i = n-1$.
So $\dim Y = n-1 = \dim \gamma(B_i)$ and $\gamma(B_i) = Y$.
Also $\dim F = 1$ and hence $F \cong \PP^1$.
The ampleness of $-(K_F + B_{|F})$ implies
that $|B \cap F| = 1$. So $B = B_1$, i.e.,
$B$ (and therefore $D$) is irreducible,
since all the $B_i$'s dominate $Y$.

Let $\Sigma \subset X_m = W$ be the image of the union of
the exceptional
locus and the indeterminacy locus of all $f_i$ ($0 \le i < m$),
i.e., the subset of $W$ over which $f_{m-1} \circ \cdots \circ f_0$
is not an isomorphism.
Then
$\dim \Sigma \le n-2$. So $\gamma(\Sigma)$
is a proper subset of $Y$.
Thus
$U_m := X_m \setminus (D(m) \cup \Sigma)$
is covered by $F \setminus F \cap D(m) \cong \C$ for a general fibre $F$ of $\gamma$.
Regarding $U_m$ as an open subset of $X \setminus D$,
$X$ is dominated by $\SD$-rational curves ($F \cong$) $F_0 \subset X$
which are the strict transforms of general $F \subset X_m$ with $F \cap \Sigma = \emptyset$.
Further, $-F_0 . (K_X +D) = - F. (K_{X_m} + D(m)) = 2- 1 = 1$.
Theorem \ref{ThH} is proved.
}
\end{setup}

\par \vskip 1pc
\begin{remark}\label{ABH}
By the surface theory, a $\Q$-CY surface
has either a rational curve or smooth elliptic curves, or it
is a simple abelian surface or a
a normal $K3$ surface with $s \ge 1$ singularities $($all Du Val$)$
and infinitely many (singular) elliptic curves (see \cite{MM}).
In particular, $S$ is not Brody hyperbolic.
\end{remark}

When the boundary $D$ is not Cartier, the
steps in \S \ref{Pf1} $\sim$ \S \ref{Pf2} and Remark \ref{ABH} imply:

\begin{remark}\label{PropD}
Let $X$ be a projective variety of dimension
$n \le 2$, $D$ a reduced Weil
divisor and
$\Gamma$ an effective Weil $\Q$-divisor such that the pair $(X, D + \Gamma)$ is dlt.
\begin{itemize}
\item[(1)]
Suppose that $K_X + D + \Gamma$ is not nef.
Then there is a $(K_X + D + \Gamma)$-negative $\mathcal{D}$-rational curve.

\par \noindent
In particular, if the pair $(X, D)$ is Mori hyperbolic, then $K_X + D + \Gamma$ is nef.
\item[(2)]
Suppose that $K_X + D + \Gamma$ is not ample
and has strictly positive degree on every $\mathcal{D}$-rational curve
and $\mathcal{D}$-algebraic $1$-torus. Then either
$(K_X + D + \Gamma)_{| \ell} \sim_{\Q} 0$ for
a $\mathcal{D}$-compact curve $\ell$ which is smooth elliptic, or
$D = \Gamma = 0$ and $X$ is either a simple abelian surface
or a normal $K3$ surface with $s \ge 1$ singularities $($all Du Val$)$
and infinitely many (singular) elliptic curves.

\par \noindent
In particular, if the pair $(X, D)$ is Brody hyperbolic,
then $K_X + D + \Gamma$ is ample.
\end{itemize}
\end{remark}

\section{Appendix: Proof of Remark \ref{PropD}}

In this appendix, we prove Remark \ref{PropD}, and give a corollary.
Set $n := \dim X$.
When $n = 1$, it is clear. Assume
that $n = 2$.
Since $(X, D + \Gamma)$ is dlt, so is $(X, D)$; $X$ is $\Q$-factorial and klt
(see \cite[Propositions 4.11 and 2.41, Corollary 2.39]{KM}).

Assume that $K_X + D + \Gamma$ is not nef. By the cone theorem \cite[Theorem 3.4.7]{KM},
there is a $(K_X + D + \Gamma)$-negative extremal rational curve $\ell$.
Let $\xi : X \to Z$ be the
corresponding contraction with $F$ a general fibre.
Then $\xi$ is either a Fano or a divisorial contraction.

Consider first the case that $\xi$ is a Fano contraction.
Then $-(K_X + D + \Gamma)_{| F}$ is ample.
Thus by the adjunction formula of Lemma \ref{dlt},
if $\dim Z = 1$ (resp.~$\dim Z = 0$
and $D \ne 0$),
we have anti-ample divisor
$(K_X + D + \Gamma)_{| F} = K_F + (D + \Gamma)_{| F}$
$\big($~resp.~of $(K_X + D + \Gamma)_{| G} = K_{G} + (D - G)_{| G}  + \Delta$
with $\Delta \ge 0$,
$G$ an irreducible component of $D$ $\big)$, so $F$ (resp.~$G$) is
a $\mathcal{D}$-rational curve having negative intersection with $K_X + D + \Gamma$.
When $\dim Z = 0$ and $D = 0$, the ampleness of $-(K_X + D + \Gamma)$ implies that
$X$ is uniruled, and hence every rational curve $C$ on $X$ is a
$\mathcal{D}$-compact rational curve with $C . (K_X + D + \Gamma) < 0$.

Consider the remaining
case that $\xi$ is a divisorial (i.e., birational) contraction.
Since $(X, D + \Gamma)$ is dlt, so are $(Z, \xi_*(D + \Gamma))$ and $(Z, \xi_*D)$.
We have the following two cases.

If the rational curve $\ell$
is contained in a component of $D$, written $\ell \subseteq D$
by abuse of notation, then, by Lemma \ref{dlt},
it would be normal and
(transversally) meet the other components of $D$ at at most one point
by the calculation:\\[-5mm]
$$\begin{aligned}
(*) \,\,\,\, 0 > &\ell . (K_X + D + \Gamma) = \deg (K_X + D + \Gamma)_{| \ell} =\\
&\deg(K_{\ell} + (D - \ell)_{| \ell} + \Delta) \ge \deg(K_{\ell} + (D - \ell)_{| \ell})
= -2 + \ell . (D - \ell).
\end{aligned}$$
Hence $\ell$ is a
$\mathcal{D}$-rational curve having negative intersection with $K_X + D + \Gamma$.
Indeed, recall that $X$ is $\Q$-factorial, so the intersections here
make sense. Also, every intersection point $P$ of $\ell$ with other components of $D$ is
in the smooth locus of $X$ and $D$ is of normal crossing at $P$
by the classification of dlt singularities $(X, D)$ (see \cite[Theorem 4.15 (1)]{KM}).

Consider now the case that $\ell \not\subseteq D$.
A simple check via the short but full classification of
the dlt pair $(Z, \xi_*D)$ by Alexeev \cite[\S 3]{Ko}
or equivalently \cite[Theorems 4.7 and 4.15]{KM}
shows that
$\ell$ is a rational curve homeomorphic to $\PP^1$
(whose inverse on the minimal resolution of $X$ is a tree of smooth rational curves)
which meets $D$ at at most two points
with the case of two points
occurring only when $P_Z := \xi(\ell)$ is a smooth point of $Z$
at which $\xi_*(D + \Gamma)$ is of normal crossing
(see Case (1) of \cite[Theorem 4.15]{KM}).
By Claim \ref{2pt} below, the $(K_X + D + \Gamma)$-negative
curve $\ell$ can not meet $D$ at two points.
Thus $\ell$ meets $D$ at at most one point, so
$\ell$ is a $\mathcal{D}$-rational curve.
Remark \ref{PropD} (1) is proved.

\begin{claim}\label{2pt}
Suppose that $\ell$ meets $D$ at two distinct points $P_i$.
Then $\ell . (K_X + D + \Gamma) \ge 0$.
\end{claim}

We prove Claim \ref{2pt}.
Consider the case that $\ell$ meets $D$ at two distinct points $P_i$.
Then $P_Z = \xi(\ell)$ is a smooth point of $Z$
and also a simple node of $D_Z := \xi(D)$.
Let $\delta : X'' \to X$ be the minimal desingularization of the set $\{P_1, P_2\} \cap (\Sing X)$,
and $E_i$ the $\delta$-exceptional divisor lying over $P_i$.
For each $i$, we set $E_i = 0$ if $\delta$ is isomorphic over $P_i$, i.e., $P_i$ is a smooth point of $X$,
otherwise, $E_i$ is a connected divisor.
We set $\delta = \id$ if both $P_i$ are smooth points of $X$.
Let $D'' := \delta' D$ and $\ell'' := \delta' \ell$ be the proper transforms.
Since $\xi(\ell)= P_Z$ is a smooth point on $Z$ and hence
$\xi \circ \delta : X'' \to Z$ is the blowup of this smooth point,
the $\xi \circ \delta$-inverse of this smooth point,
i.e., the $\xi \circ \delta$-exceptional divisor $E_{\xi \circ \delta}$
is equal to $\ell'' + E_1 + E_2$,
a simple normal crossing divisor and also a union of smooth rational curves,
whose dual graph is a connected tree. Since $\ell''$ is the only $(-1)$-curve
in $E_{\xi \circ \delta} = \ell'' + E_1 + E_2$ connecting $E_1$ and $E_2$
($\delta : X'' \to X$ being minimal)
and $\ell''$ meets $D''$ at the (unique) point lying over $P_i$ when $E_i = 0$
(for one or both of $i \in \{1, 2\}$),
i.e., when $\delta$ is isomorphic over $P_i$,
the blowup $\xi \circ \delta : X'' \to Z$ produces a linear chain
as $E_{\xi \circ \delta} = \ell'' + E_1 + E_2$ which meets $D''$ exactly at two points
(and transversally). In other words,
$\xi \circ \delta$ is the composition of the blowup of the node $P_Z$ of $D_Z$
followed by the blowups of nodes on the total transforms of $D_Z$.
Inductively, we can verify that
$K_{X''} + D'' + \ell'' + E_1 + E_2 = (\xi \circ \delta)^*(K_Z + D_Z)$.
Pushing this forward via $\delta_*$, we get
$K_X +  D + \ell = \xi^*(K_Z + D_Z)$, which has zero intersection with
the $\xi$-exceptional curve $\ell$ (with $\ell^2 < 0$).
Write $\Gamma = b\ell + \Gamma_1$ where $b \in [0, 1]$
and $\ell$ and $\Gamma_1$ have no common component.
Then $0 > \ell . (K_X + D + \Gamma) = \ell . (K_X + D + \ell + (b-1) \ell + \Gamma_1)
= (1-b)(- \ell^2) + \ell . \Gamma_1 \ge 0 + 0$.
This is a contradiction.
This proves Claim \ref{2pt}.

\par \vskip 1pc
We return to the proof of Remark \ref{PropD}.
We may now assume that $K_X + D + \Gamma$ is nef.
By the abundance theorem for dlt pairs and $n \le 3$ (see \cite{KeMaMc}),
there is a morphism $\sigma : X \to Y$ with a connected general fibre $F$ and
an ample divisor $H$ on $Y$ such that $K_X + D + \Gamma = \sigma^*H$.
We have $(K_X + D + \Gamma)_{| F} = \sigma^*H_{| F}$ $\sim_{\Q} 0$.
We divide into: Case (I) $\dim Y = 0$, Case (II) $\dim Y = 1$ and Case (III) $\dim Y = 2$.

\par \vskip 1pc
Case (I) $\dim Y = 0$, i.e., $K_X + D + \Gamma \sim_{\Q} 0$.
If $G$ is an irreducible component of $D$, then
$(K_X + D + \Gamma)_{| G} \sim_{\Q} 0$ and $G$
is a $\mathcal{D}$-rational curve, a
$\mathcal{D}$-algebraic $1$-torus or a $\mathcal{D}$-torus.

Thus we may assume that $D = 0$ and $K_X + \Gamma \sim_{\Q} 0$.
If $\Gamma \ne 0$, then $X$ is uniruled and every rational curve on $X$
is a $\mathcal{D}$-compact rational curve
having zero intersection with $K_X + D + \Gamma$.
Suppose that $\Gamma = 0$ so that $K_X \sim_{\Q} 0$.
Let $X'' \to X$ be the global index-one cover
(unramified in codimension-one)
so that $K_{X''} \sim 0$
and $X''$ has at worst canonical singularities.
The minimal resolution $\hat{X}$ of $X''$ satisfies
$K_{\hat X} \sim 0$.
Thus Remark \ref{PropD} follows from the surface theory
(see Remark \ref{ABH} below).
Indeed, a (smooth) K3 surface has at least one rational curve and
infinitely many elliptic curves (see \cite{MM}).

\par \vskip 1pc
Case (II) $\dim Y = 1$. Then $(K_X + D + \Gamma)_{| F} \sim_{\Q} 0$.
Hence a general fibre $F$ is
either a $\mathcal{D}$-rational curve, or a
$\mathcal{D}$-algebraic $1$-torus, or a $\mathcal{D}$-torus.

\par \vskip 1pc
Case (III) $\dim Y = 2$, i.e.,
$\sigma$ is biratonal. We may assume that $\sigma$ is not isomorphic
and hence contracts a curve $\ell$.
As $K_X + D + \Gamma = \sigma^*(K_Y + \sigma_*(D + \Gamma))$,
the pair $(Y, \sigma_*(D + \Gamma))$
is also lc. Of course,
$(K_X + D + \Gamma)_{| \ell} \sim_{\Q} 0$.

Remark \ref{PropD} in this case follows from Claim \ref{lc} below,
without using powerful machineries.

Alternatively, as in \S \ref{Pf2} via LMMP, we can reduce to the case of Lemma \ref{divcontr}
and conclude that $\ell$ ($= S$ there) is a
$\mathcal{D}$-compact rational curve.
\begin{claim}\label{lc}
At least one of the following cases holds.
\begin{itemize}
\item[(C1)]
$\ell$ is a component of $D$; $\ell$ is either a $($smooth$)$ elliptic curve
disjoint from $D-\ell$, or a smooth rational curve and $($transversally$)$ meet the other components of $D$
at at most two points.

In the rest, $\ell$ is not a component of $D$.

\item[(C2)]
The geometric genus $g(\ell) \le 1$
$($with equality holding true only when $\ell$ is a $($smooth$)$ elliptic curve and
$P = \sigma(\ell)$ is a non-klt point
of $Y )$. $\ell$ is disjoint from $D$.

\item[(C3)]
$\ell$ is a rational curve homeomorphic to $\PP^1$
and meets $D$ at at most two points.

\item[(C4)]
There is a rational curve $D_1$ in $D$ which is homeomorphic
to $\PP^1$ and meets
the other components of $D$ at at most one point
$($this case needs to be added only when $P = \sigma(\ell)$ is a non-klt point
of $Y )$.

\end{itemize}
\end{claim}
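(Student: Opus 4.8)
The plan is to argue locally at $P := \sigma(\ell)$, combining the adjunction formula of Lemma~\ref{dlt} with the explicit classification of two-dimensional dlt and strictly log canonical singularities (\cite[Theorems 4.7 and 4.15]{KM}, \cite[\S 3]{Ko}). The relevant input is that $\sigma$ is a crepant birational contraction of the lc pair $(Y, B_Y)$, $B_Y := \sigma_*(D + \Gamma)$, that $(X, D + \Gamma)$ is dlt, and that $(K_X + D + \Gamma).\ell = (\sigma^*(K_Y + B_Y)).\ell = 0$. One gets $(K_X + D + \Gamma)_{|\ell} \sim_{\Q} 0$ for free, since $\ell$ is contracted to the point $P$ and $K_Y + B_Y$ is $\Q$-Cartier, so $m(K_X + D + \Gamma)_{|\ell}$ is trivial for $m$ making $m(K_Y + B_Y)$ Cartier; this is what the surrounding argument uses, although it is not part of the Claim. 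Writing $D + \Gamma = \sigma_*^{-1} B_Y + \sum_i c_i E_i$ over $P$, with the $E_i$ the $\sigma$-exceptional curves through $P$, we have $c_i = -a(E_i; Y, B_Y) \in [0, 1]$, and $c_i = 1$ exactly when $E_i$ is a component of $D + \lfloor \Gamma \rfloor$.

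First I would treat the case where $\ell$ is a component of $D + \lfloor \Gamma \rfloor$; absorbing $\ell$ into $D$ and shrinking $\Gamma$ accordingly, which preserves dlt, we may take $\ell \subseteq D$. Then Lemma~\ref{dlt} with $D_1 = \ell$ shows that $\ell$ is normal, hence a smooth curve, and that $0 = \deg\bigl(K_\ell + (D - \ell)_{|\ell} + \Delta\bigr)$ with $\Delta \ge 0$ and $(D-\ell)_{|\ell}$ reduced, so
$$2 g(\ell) - 2 + \#\bigl(\ell \cap (D - \ell)\bigr) \le 0 .$$
Hence $g(\ell) \le 1$; if $g(\ell) = 1$ then $\ell$ is disjoint from $D - \ell$ and $\Delta = 0$; if $g(\ell) = 0$ then $\ell$ meets $D - \ell$ in at most two points, and that $X$ is smooth and $D$ simple normal crossing there, so that the intersection is transverse, is exactly \cite[Theorem 4.15(1)]{KM}. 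This yields (C1) when $\ell$ is a component of $D$, and (C2) or (C3) in the variant where $\ell$ was a component of $\lfloor \Gamma \rfloor$ only.

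In the remaining case $c_\ell < 1$, $\sigma$ extracts $\ell$ with discrepancy in $(-1, 0]$ over $(Y, B_Y)$, and I would read off the conclusion from the germ of $(Y, B_Y)$ at $P$. If $P$ is a klt point, then $Y$ has a rational singularity there and no reduced component of $B_Y$ passes through or is contracted to $P$ (either would make $P$ an lc center, contradicting klt); hence $\ell \cap D = \emptyset$ and, by \cite[Theorem 4.7]{KM}, $\ell$ is homeomorphic to $\PP^1$ with preimage on the minimal resolution of $X$ a tree of smooth rational curves --- case (C3). If $P$ is strictly log canonical, I would use Alexeev's classification of such germs with a reduced boundary (\cite[\S 3]{Ko}, \cite[Theorem 4.15]{KM}): the strict transform of the non-klt locus of $(Y, B_Y)$ is then either an exceptional elliptic curve or a cycle of rational curves disjoint from $\sigma_*^{-1} B_Y$ --- in which case $\ell$ is one of these curves (coefficient one, so handled as in the previous paragraph, giving (C1), or (C2) if it has instead been weighted into $\lfloor \Gamma \rfloor$) or a smooth rational curve lying over it and meeting $D$ in at most two points, giving (C3) --- or else a reduced component of $B_Y$ through $P$, whose strict transform is a rational curve $D_1$ in $D$ homeomorphic to $\PP^1$ meeting the other components of $D$ in at most one point, giving (C4). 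In every case we land in one of (C1)--(C4).

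The main obstacle is the bookkeeping in this last step: one must match every item of the two-dimensional log canonical classification --- including the configurations where $\sigma$ extracts only part of a log canonical chain or cycle, where several exceptional curves collapse simultaneously to $P$, and where components of $D$ versus of $\lfloor \Gamma \rfloor$ run through $P$ --- to precisely one of the four normal forms, keeping careful track of which exceptional curves acquire coefficient one (hence belong to $D$) and of the at-most-one versus at-most-two point counts. Each individual verification is routine, but they are numerous, which is why the statement is phrased as a disjunction of four cases.
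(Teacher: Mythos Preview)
Your approach is correct in outline and rests on the same ingredients as the paper (adjunction for the case $\ell\subseteq D+\lfloor\Gamma\rfloor$, and the classification of two--dimensional lc germs from \cite[Theorems 4.7, 4.15]{KM} otherwise), but the organization is genuinely different. The paper does not split on whether $P$ is klt or strictly lc for $(Y,B_Y)$. Instead, after passing to the minimal resolution $\eta:X'\to X$ it writes $\Gamma=a\ell+\Gamma_1$ and computes
\[
0=\ell'\cdot\bigl(K_{X'}+\ell'+D'+\Gamma_1'+E\bigr)+(a-1){\ell'}^{2}\ \ge\ 2p_a(\ell')-2+\ell'\cdot(D'+\Gamma_1'+E),
\]
which in one stroke gives $p_a(\ell')\le 1$, forces $a=1$ and $\ell'\cap\eta^{-1}(D)=\emptyset$ in the equality case (hence (C2)), and reduces the remaining analysis to the case $\ell'\cong\PP^1$; only then is the lc classification at $P$ invoked to see that $\ell$ is homeomorphic to $\PP^1$ except possibly in the nodal (cusp) case of \cite[Theorem 4.7(1)]{KM}, where one lands in (C2)--(C4). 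Your route replaces this single intersection computation by the discrepancy dichotomy: in your klt case you deduce $\ell\cap D=\emptyset$ from the absence of lc centers and the rationality and $\PP^1$--topology of $\ell$ from the tree structure of klt exceptional loci, which is valid but requires the extra observation (which you leave implicit) that every connected component of $\Exc(\eta)$ meets $\ell'$ in at most one point, so that $\ell'\to\ell$ is a bijection. The trade--off is that the paper's formula handles the genus bound and the disjointness from $D$ uniformly and cleanly, while your argument front--loads the classification and must then, in the strictly lc case, track which exceptional curves on the model $X$ carry coefficient $<1$ amid chains and cycles --- exactly the bookkeeping you flag as the main obstacle. Both arguments are complete once that case analysis is carried out; the paper's calculation simply shortens it.
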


We prove Claim \ref{lc}.
When $\ell$ is a component of $D$, we have case (C1)
by the calculation in (*) above,
with `$0 >$' replaced by `$0 =$'.

So assume that $\ell$ is not a component of $D$.
Let $\eta: X' \to X$ be the minimal resolution and $\Exc(\eta)$ the exceptional divisor.
%Since $(X, D + \Gamma)$ is dlt, $X$ is klt and hence each $E_i$ is
%a smooth rational curve with $E_i^2 \le -2$. Hence $K_{X'} . E_i \ge 0$.
Let $D' = \eta'D$, $\Gamma' = \eta' \Gamma$ and $\ell' = \eta' \ell$ be
the proper transforms of $D$, $\Gamma$ and $\ell$, respectively.
Write
$K_{X'} + E_K = \eta^*K_X$,
for some $\eta$-contractible divisor $E_K$.
Since $\eta$ is a minimal resolution, $E_K \ge 0$.
%Supp $E'$ consists of exactly the {\it connected components} of $\Exc(\eta)$
%which do not lie over Du Val singularities of $X$.
Write $\eta^*D = D'+ E_D$ and $\eta^*\Gamma = \Gamma'+ E_{\Gamma}$,
with $E_D$ and $E_{\Gamma}$ effective and $\eta$-contractible.
Then
$$K_{X'} + D' + \Gamma' + E = \eta^*(K_X + D + \Gamma)$$
where $E = E_K + E_D + E_{\Gamma}$. In particular, $\Supp(D'+ E) \supseteq \eta^{-1}(D)$.

Write $\Gamma = a \ell + \Gamma_1$ and hence $\Gamma' = a \ell' + \Gamma_1'$,
where $a \in [0, 1]$ and $\Gamma_1$ (resp.~$\Gamma_1'$) does not contain $\ell$ (resp.~$\ell'$).
%By the classification of log canonical singularity $(Y, \sigma_*D)$
%in \cite[Theorem 4.7]{KM}, the geometric genus $g(\ell) \le 1$.
Since $\ell'$ is contracted by $\eta \circ \sigma$, $(\ell')^2 < 0$.
Hence $(a-1) \ell'^2 \ge 0$.
% with equality holding true only when $a = 1$.
%i.e., $\ell$ is a component of the $\lfloor \Gamma \rfloor$.
Note that $\ell$ is perpendicular to $\sigma^*(K_Y + \sigma_*(D + \Gamma)) = K_X + D + \Gamma$.
So
$$
\begin{aligned}
0 &= \ell . (K_X + D + \Gamma) = \ell' . \eta^*(K_X + D + \Gamma) = \ell' . (K_{X'} + D' + \Gamma' + E) \\
&= \ell' . (K_{X'} + \ell' + D' + \Gamma_1' + E) + (a-1) \ell'^2 \\
&\ge 2p_a(\ell') - 2 + \ell' . (D' + \Gamma_1' + E) \ge 2p_a(\ell') - 2.
\end{aligned}$$
Thus $g(\ell) \le p_a(\ell') \le 1$. Further, if $p_a(\ell') = 1$, then the inequalities in the above
display all become equalities. So $a = 1$,
%$\ell \subseteq \lfloor \Gamma \rfloor$,
and $\emptyset = \ell' \cap (D' + E) \supseteq \ell' \cap \eta^{-1}(D)$.
This is case (C2).

We may now assume that $p_a(\ell') = 0$, i.e., $\ell' \cong \PP^1$.
Since $P = \sigma(\ell)$ is a log canonical singularity of $(Y, \sigma_*(D + \Gamma))$
and hence the reduced divisor $\sigma_*D$ contains at most two analytic branches at $P$
by \cite[Theorem 4.15]{KM}, $\ell$ meets $D$ at at most two points.
The exceptional divisor $E_P$ of the {\it minimal}
resolution of $P$ is given in \cite[Theorem 4.7, 1--5]{KM}.
The rational curve $\ell$ is homeomorphic to $\PP^1$ and hence
we are in case (C3) unless
Case~(1) in \cite[Theorem 4.7]{KM} occurs, i.e.,
$E_P$ is a rational curve with a simple node
%so that $\ell$ is the strict transform of $E_P$,
and the connected component $\Sigma$ of $D + \ell$
containing $\ell$ is contracted to the point $P$.

We consider the latter case.
$\Sigma$ is obtained from the nodal curve $E_P$ by blowing up some points on it and then blowing
down some curves in the inverse of $E_P$. Hence $\Sigma$
is the image of a divisor $\Sigma''$ on a blowup $X''$ of $X$
such that $\Sigma''$ is of simple normal crossing and consists of smooth rational curves,
$\Sigma''$ contains only one loop (which is simple), and $X'' \to X$
is the contraction of some rational trees contained in $\Sigma''$, $X$ being klt.
Thus, either $\ell$ has a node and case (C2) or C(4) occurs, or
$\ell$ is homeomorphic to $\PP^1$ and case (C3) occurs.
This proves Claim \ref{lc} and also Remark \ref{PropD}.

\par \vskip 1pc
The result below follows from Remark \ref{PropD}
and Lemma \ref{restr}.

\begin{corollary}\label{cPropD}
Let $(X, D)$ be a projective Brody hyperbolic pair with
$\dim X \le 3$ and $\Gamma \ge 0$ a Weil $\Q$-divisor
such that the pair $(X, D + \Gamma)$ is dlt.
Then the restriction $(K_X + D + \Gamma)_{| G}$ is an ample divisor on $G$
for every irreducible component $G$ of $D + \lfloor \Gamma \rfloor$.
\end{corollary}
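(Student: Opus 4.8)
The plan is to deduce the statement directly from the adjunction formula of Lemma~\ref{dlt} together with Proposition~\ref{PropD}, exactly as the surface case is meant to feed the inductive step. Fix an irreducible component $G$ of $D + \lfloor \Gamma \rfloor$; since $\dim X \le 3$ we have $\dim G \le 2$. By Lemma~\ref{dlt} there is an effective Weil $\Q$-divisor $\Delta$ on $G$ with
$$(K_X + D + \Gamma)_{|G} = K_G + D_G + \Gamma_G,$$
where, in the notation fixed just before Lemma~\ref{restr}, $D_G$ is the reduced divisor cut out on $G$ by the components of $D$ other than $G$, while $\Gamma_G$ collects the fractional part of $\Gamma$, the residual integral part $\lfloor \Gamma \rfloor - G$ when $G \subseteq \lfloor \Gamma \rfloor$, and the different $\Delta$; moreover the pair $(G, D_G + \Gamma_G)$ is dlt and $D_G$ is reduced. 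First I would verify that the pair $(G, D_G)$ is again Brody hyperbolic: when $G$ is a component of $D$ this is precisely the Remark following Lemma~\ref{dlt}, and when $G \subseteq \lfloor \Gamma \rfloor$ but $G \not\subseteq D$ it follows from $G \setminus (G \cap D) \subseteq X \setminus D$ together with the fact that every stratum of $D_G$ is contained in the corresponding stratum of $D$ restricted to $G$, so that Brody hyperbolicity of the strata of $D$ descends to $G$.

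With $(G, D_G + \Gamma_G)$ dlt of dimension $\le 2$ and $(G, D_G)$ Brody hyperbolic, I would argue by contradiction: suppose $(K_X + D + \Gamma)_{|G} = K_G + D_G + \Gamma_G$ is not ample. Brody hyperbolicity of $(G, D_G)$ forbids the existence of a $\mathcal{D}_G$-rational curve (it would yield a non-constant holomorphic map $\C \to G$ landing in a stratum disjoint from the rest of $D_G$) and of a $\mathcal{D}_G$-algebraic $1$-torus (compose such a $\C^* \to G$ with $\exp \colon \C \to \C^*$), so the hypotheses of Proposition~\ref{PropD} are vacuously met. Its conclusion then leaves two possibilities, both excluded by (BH): either $(K_G + D_G + \Gamma_G)_{|\ell} \sim_{\Q} 0$ for a $\mathcal{D}_G$-compact smooth elliptic curve $\ell$ on $G$ --- impossible, since an elliptic curve is a non-constant image of $\C$ lying in a stratum meeting no other component of $D_G$ --- or $D_G = \Gamma_G = 0$ and $G$ is a simple abelian surface or a normal $K3$ surface with infinitely many elliptic curves --- impossible, since each of these admits non-constant (indeed dense) holomorphic images of $\C$, contradicting Brody hyperbolicity of $G = (G, 0)$. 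Hence $(K_X + D + \Gamma)_{|G}$ is ample.

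The routine points are the bookkeeping in the adjunction formula --- parking $\lfloor \Gamma \rfloor$ and the different in $\Gamma_G$ so that the dichotomy ``$D_G = \Gamma_G = 0$'' of Proposition~\ref{PropD} applies verbatim --- and the elementary observation that elliptic curves, simple abelian surfaces and normal $K3$ surfaces all fail (BH). The step needing slight care, which I expect to be the main obstacle, is the descent of Brody hyperbolicity to $(G, D_G)$ when $G$ is a component of $\lfloor \Gamma \rfloor$ rather than of $D$: the Remark after Lemma~\ref{dlt} is phrased only for components of $D$ and for general fibres, so here one must check directly that restricting the hyperbolic strata of $D$ to $G$ produces the hyperbolic strata of the induced pair $(G, D_G)$.
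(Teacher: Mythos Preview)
Your proposal is correct and follows essentially the same route as the paper, which simply records that the corollary ``follows from Proposition~\ref{PropD} and Lemma~\ref{restr}.'' You have unpacked that one-line proof: adjunction on $G$ via Lemma~\ref{dlt} (packaged in Lemma~\ref{restr}) gives a dlt pair $(G,D_G+\Gamma_G)$ of dimension $\le 2$, Brody hyperbolicity descends to $(G,D_G)$, and then Proposition~\ref{PropD} applies. Your only addition is the explicit check of the descent when $G\subseteq\lfloor\Gamma\rfloor$ rather than $G\subseteq D$, which the paper leaves implicit; your argument for this is fine. You could shorten the contradiction step by citing the ``In particular'' clause of Proposition~\ref{PropD} directly rather than running through its dichotomy by hand.
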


\vspace{.5cm}

%\newpage

%
%
%
%

\end{document}